\newtheorem{theorem}{Theorem}[section]
\newtheorem{lemma}[theorem]{Lemma}
\newtheorem{proposition}[theorem]{Proposition}
\newtheorem{corollary}[theorem]{Corollary}
\theoremstyle{definition}
\newtheorem{definition}[theorem]{Definition}
\theoremstyle{remark}
\newtheorem{remark}[theorem]{Remark}
\numberwithin{equation}{section}
\begin{document}
\setcounter{page}{1}

\title[Coarse Quotient Mappings between Metric Spaces]{Coarse Quotient Mappings between Metric Spaces}

\author[Sheng Zhang]{Sheng Zhang}

\address{Department of Mathematics, Texas A\&M University, College Station, TX 77843-3368, USA}
\email{\textcolor[rgb]{0.00,0.00,0.84}{z1986s@math.tamu.edu}}


\subjclass[2010]{Primary 46B80; Secondary 46B20.}



\begin{abstract}
We give a definition of coarse quotient mapping and show that
several results for uniform quotient mapping also hold in the coarse
setting. In particular, we prove that any Banach space that is a
coarse quotient of $L_p\equiv L_p[0,1]$, $1<p<\infty$, is isomorphic
to a linear quotient of $L_p$. It is also proved that $\ell_q$ is
not a coarse quotient of $\ell_p$ for $1<p<q<\infty$ using
Rolewicz's property ($\beta$).
\end{abstract} \maketitle

\section{Introduction}

In the linear theory of Banach spaces, a surjective bounded linear
operator between Banach spaces is called a linear quotient mapping.
For linear quotient mappings, the Open Mapping Theorem guarantees
that the image of the closed unit ball contains the origin as an
interior point. In view of this property, the notion of quotient
mapping was generalized to the nonlinear setting by Bates, Johnson,
Lindenstrauss, Preiss and Schechtman in \cite{Bates}, as follows:

\begin{definition}
A mapping $f:X\rightarrow Y$ between two metric spaces $X$ and $Y$
is called co-uniformly continuous if for every $\epsilon>0$, there
exists $\delta=\delta(\epsilon)>0$ so that for every $x\in X$,
$$f(B(x,\epsilon))\supset B(f(x),\delta).$$ If $\delta$ can be
chosen larger than $\epsilon/C$ for some $C>0$ independent of
$\epsilon$, then $f$ is said to be co-Lipschitz. Here and throughout
this article $B(x,\epsilon)$ denotes the closed ball with center $x$
and radius $\epsilon$.

A mapping $f:X\rightarrow Y$ that is both uniformly continuous and
co-uniformly continuous (resp. Lipschitz and co-Lipschitz) is called
a uniform quotient (resp. Lipschitz quotient) mapping. If in
addition $f$ is surjective, then $Y$ is called a uniform quotient
(resp. Lipschitz quotient) of $X$.
\end{definition}

In addition to uniform continuity and Lipschitz continuity, coarse
continuity is another important notion for nonlinear mappings.
However, so far there is no satisfactory definition for quotient
mapping in the coarse category. In the present article we introduce
such a definition, which is reasonable in the sense that coarse
quotient mappings have properties analogous to those of uniform
quotient mappings.

This paper is organized as follows: in section 2 we present some
ingredients we need from coarse geometry and then give the definition
of coarse quotient mapping. Section 3 is devoted to ultraproduct
techniques for coarse quotient mappings, which allow us to give an
isomorphic characterization of coarse quotients of $L_p$
($1<p<\infty$). In section 4 we apply Rolewicz's property ($\beta$)
to study coarse quotient mappings and prove that $\ell_q$ is not a
coarse quotient of $\ell_p$ for $1<p<q<\infty$.

\section{Definition of coarse quotient mapping}

We start this section by listing some definitions and facts from
coarse geometry.

\begin{definition}\label{CC}
A mapping $f:X\rightarrow Y$ between two metric spaces $X$ and $Y$
is called coarsely continuous if for every $R>0$ there exists
$S=S(R)>0$ such that $d(x,y)<R\Rightarrow d(f(x),f(y))<S$.

A mapping $f:X\rightarrow Y$ between two metric spaces $X$ and $Y$
is said to be a coarse equivalence if $f$ is coarsely continuous and
there exists another coarsely continuous mapping $g:Y\rightarrow X$
such that $\sup\{d(g\circ f(x),x): x\in X\}<\infty$ and
$\sup\{d(f\circ g(y),y): y\in Y\}<\infty$. In this case we say that $X$ is
coarsely equivalent to $Y$. If $X$ is coarsely equivalent to a subset of $Y$
we say that $X$ coarsely embeds into $Y$.
\end{definition}

It is well known that $f$ is coarsely continuous if and only if
$\omega_f(t)<\infty$ for all $0<t<\infty$, where $\omega_f$ is the
modulus of continuity of $f$, defined as
$$\omega_f(t)=\sup\{d(f(x),f(y)): d(x,y)\leq t\}.$$ The following
proposition says that a coarsely continuous mapping between metric
spaces is Lipschitz for large distances provided its domain space is
metrically convex. The proof is similar to the one for uniformly
continuous mapping, which can be found in \cite{BL}. Recall that a metric space
$X$ is called metrically convex if for every $x_0, x_1\in X$ and for
every $0<t<1$, there is a point $x_t\in X$ such that $d(x_0,
x_t)=td(x_0, x_1)$ and $d(x_1, x_t)=(1-t)d(x_0, x_1)$.

\begin{proposition}\label{LipLarge}
Let $f:X\rightarrow Y$ be a coarsely continuous mapping between two
metric spaces $X$ and $Y$, and assume that $X$ is metrically convex.
Then for every $d>0$, there exists $l=l(d)>0$ such that
$d(f(x),f(y))\leq l\cdot d(x,y)$ whenever $d(x,y)\geq d$. Or,
equivalently, for every $\epsilon>0$, there exists $L=L(\epsilon)>0$
such that $f(B(x,r))\subset B(f(x), Lr)$
for all $r\geq\epsilon$ and all $x\in X$.
\end{proposition}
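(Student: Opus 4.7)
The plan is to exploit metric convexity to chain together short hops, each of which is controlled by the modulus of continuity $\omega_f$, and then sum along the chain.

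First I would fix $d>0$ and set $S:=\omega_f(d)$, which is finite by the coarse continuity of $f$. Given any $x,y\in X$ with $d(x,y)\geq d$, I would choose the integer $n:=\lceil d(x,y)/d\rceil$ and iteratively invoke metric convexity to produce a sequence $x=x_0,x_1,\dots,x_n=y$ with
$$d(x_{i-1},x_i)=\tfrac{d(x,y)}{n}\leq d \qquad (1\leq i\leq n).$$
(This is a routine induction: at step $i$ apply the definition of metric convexity to $x_{i-1}$ and $y$ with parameter $t=1/(n-i+1)$.)

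Next I would apply the triangle inequality and the definition of $\omega_f$ to estimate
$$d(f(x),f(y))\leq\sum_{i=1}^{n}d(f(x_{i-1}),f(x_i))\leq n\,\omega_f(d)\leq\Bigl(\tfrac{d(x,y)}{d}+1\Bigr)S\leq\tfrac{2S}{d}\,d(x,y),$$
where the last inequality uses $d(x,y)\geq d$. Setting $l(d):=2\omega_f(d)/d$ yields the first formulation.

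For the equivalence with the second formulation I would note that if $r\geq\epsilon$ and $y\in B(x,r)$, then either $d(x,y)\geq\epsilon$, in which case the above gives $d(f(x),f(y))\leq l(\epsilon)\,d(x,y)\leq l(\epsilon)\,r$, or $d(x,y)<\epsilon\leq r$, in which case $d(f(x),f(y))\leq\omega_f(\epsilon)\leq\bigl(\omega_f(\epsilon)/\epsilon\bigr)r$. Taking $L(\epsilon):=\max\{l(\epsilon),\omega_f(\epsilon)/\epsilon\}=l(\epsilon)$ finishes both directions. There is no real obstacle; the only thing that requires a tiny amount of care is the bookkeeping that converts the additive slack $n\omega_f(d)\leq (d(x,y)/d+1)S$ into a purely linear bound, which is where the hypothesis $d(x,y)\geq d$ is used to absorb the ``$+1$'' into a constant factor.
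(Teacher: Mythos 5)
Your chaining argument is correct and is exactly the standard proof that the paper itself omits, deferring instead to the analogous statement for uniformly continuous maps in [BL]. The only cosmetic slip is that at the final step $i=n$ your parameter $t=1/(n-i+1)$ equals $1$, which lies outside $(0,1)$; one should simply apply metric convexity for $i=1,\dots,n-1$ and set $x_n=y$, which already satisfies $d(x_{n-1},y)=d(x,y)/n\le d$.
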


Now we are ready to give the definition of coarse quotient mapping.
Given $\varepsilon\geq0$, for a subset $A$ of a metric space $X$ we
denote by $A^{\varepsilon}:=\{x\in X: d(x,y)\leq\varepsilon~
\text{for some}~y\in A\}$ its $\varepsilon$-neighborhood.

\begin{definition}\label{CQ}
Let $K\geq0$ be a constant. A mapping $f:X\rightarrow Y$ between
two metric spaces $X$ and $Y$ is called co-coarsely continuous
with constant $K$ if for every $\epsilon>0$ there exists
$\delta=\delta(\epsilon)>0$ so that for every $x\in X$,
\begin{align}\label{CQinclu}
B(f(x),\epsilon)\subset f(B(x,\delta))^K.
\end{align}
If in addition $f$ is coarsely continuous, then $f$ is called a coarse quotient mapping
with constant $K$ and $Y$ is said to be a coarse quotient of $X$.
\end{definition}

\begin{remark}
\eqref{CQinclu} immediately implies that $f(X)$ is $K$-dense in $Y$, i.e., $Y=f(X)^K$.
\end{remark}

It is easy to check that the composition of two coarse quotient
mappings is still a coarse quotient mapping. A Lipschitz quotient
mapping must be a coarse quotient mapping, but the converse is not
true. Indeed, the inclusion mapping from $\mathbb{Z}$ into
$\mathbb{R}$ is an example of coarse quotient mapping which fails
to be a uniform quotient mapping. Moreover, the definition is
justified by the following proposition.

\begin{proposition}
If a mapping $f:X\rightarrow Y$ between two metric spaces $X$ and
$Y$ is a coarse equivalence, then $Y$ is a coarse quotient of $X$.
\end{proposition}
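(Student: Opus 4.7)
The plan is to show that with $f$ already given to be coarsely continuous, one can extract the co-coarse continuity directly from the existence of the ``inverse'' $g$ and the two boundedness conditions in Definition \ref{CC}.

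Set $M_1 = \sup_{x\in X}d(g\circ f(x), x)$ and $M_2 = \sup_{y\in Y}d(f\circ g(y), y)$, both finite by hypothesis. I claim $f$ is co-coarsely continuous with constant $K=M_2$. Given $\epsilon>0$ and $x\in X$, take any $y\in B(f(x),\epsilon)$. The strategy is to use $g(y)$ as the preimage-candidate: it lies close to $x$ because $g$ is coarsely continuous and $g\circ f$ is bounded-close to the identity, while $f(g(y))$ is within $M_2$ of $y$.

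More concretely, since $d(y,f(x))\le\epsilon$ and $g$ is coarsely continuous with modulus $\omega_g$, I get
\[
d(g(y), g(f(x))) \le \omega_g(\epsilon) < \infty.
\]
Combining with $d(g(f(x)), x)\le M_1$, the triangle inequality yields $d(g(y), x)\le \omega_g(\epsilon)+M_1 =: \delta(\epsilon)$. Hence $g(y)\in B(x,\delta)$ and therefore $f(g(y))\in f(B(x,\delta))$. Since $d(y, f(g(y)))\le M_2 = K$, we conclude $y\in f(B(x,\delta))^K$, proving the inclusion $B(f(x),\epsilon)\subset f(B(x,\delta))^K$.

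There is no real obstacle: the argument is essentially bookkeeping with the triangle inequality, and the only ``choice'' is to let the constant $K$ absorb the displacement $M_2$ of $f\circ g$ from the identity (rather than asking $f(B(x,\delta))$ to contain the ball exactly). The surjectivity-like conclusion $Y=f(X)^K$ is automatic from the remark following Definition \ref{CQ}, and is also directly visible here because every $y\in Y$ satisfies $d(y, f(g(y)))\le M_2$.
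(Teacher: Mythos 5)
Your proof is correct and follows the paper's argument essentially verbatim: both take $K=M_2$, set $\delta(\epsilon)=\omega_g(\epsilon)+M_1$, and use $g(y)$ as the near-preimage via the same two triangle-inequality steps.
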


\begin{proof}
By the definition of coarse equivalence, there exists a coarsely
continuous mapping $g:Y\rightarrow X$ such that
\begin{align}
&\sup\{d(g\circ f(x),x): x\in X\}\leq M_1,\nonumber\\
&\sup\{d(f\circ g(y),y): y\in Y\}\leq M_2,\nonumber
\end{align}
where $M_1$ and $M_2$ are nonnegative constants. To see that $f$ is a coarse quotient mapping,
we claim that in Definition \ref{CQ} the constant $K$ can be chosen as $M_2$, and for
every $\epsilon>0$, $\delta=\delta(\epsilon)$ can be chosen as
$\omega_g(\epsilon)+M_1$.

Indeed, for every $x\in X$ and every $y\in B(f(x),\epsilon)$, one has
$d(g\circ f(x),g(y))\leq\omega_g(\epsilon)$, thus $d(g(y),x)\leq\omega_g(\epsilon)+M_1$
by the triangle inequality, and hence $z:=g(y)$ satisfies
$d(y,f(z))=d(y,f\circ g(y))\leq M_2$.
\end{proof}

In general, coarse quotient mappings are not necessarily surjective:
\eqref{CQinclu} only implies that $Y=f(X)^K$.
However the next lemma shows that in the Banach space setting,
one can always redefine a coarse quotient mapping
to have constant $K=0$. This striking fact was pointed out by W. B. Johnson. The underlying
idea is to use transfinite induction based on the oberservation that if a Banach space $Y$
is a coarse quotient of a Banach space $X$, then $\text{card}(X)\geq\text{card}(Y)$. Here
$\text{card}(X)$ is the cardinality of $X$. Indeed, if $f:X\rightarrow Y$ is a coarse quotient mapping,
then for any dense subset $S$ of $X$, $f(S)$ is $D$-dense in $Y$ for some $D\geq0$.
This implies that $\text{dens}(X)\geq\text{dens}(Y)$, where $\text{dens}(X)$ is the density character of $X$.
Therefore $\text{card}(X)\geq\text{card}(Y)$ (see, e.g., \cite{MO}).

Let $\delta>0$. We say that a subset $N$ is a $\delta$-net of a metric space $X$
if it is $\delta$-dense in $X$ and $\delta$-separated, i.e., $d(u,v)\geq\delta$ for all
$u,v\in N$. The existence of nets is guaranteed by Zorn's Lemma.

\begin{lemma}\label{Johnson}
Let $X$ and $Y$ be Banach spaces. Assume that
$Y$ is a coarse quotient of $X$. Then there exists a coarse quotient
mapping with constant 0 from $X$ onto $Y$.
\end{lemma}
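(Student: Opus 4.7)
My plan is to modify $f$ on a carefully chosen subset of $X$ so that the new map $\tilde f$ is surjective with the stronger inclusion $B(\tilde f(x),\epsilon)\subset \tilde f(B(x,\delta))$ (no $K$-neighborhood), yet differs from $f$ by at most $K$ pointwise. For each $y\in Y$ set $A(y):=f^{-1}(\overline{B}(y,K))$; this is nonempty by the $K$-density of $f(X)$. The aim is to produce a family $\{S_y\}_{y\in Y}$ of pairwise disjoint subsets $S_y\subset A(y)$ such that each $S_y$ is $1$-dense in $A(y)$. Once this is done, I will define $\tilde f(x):=y$ for $x\in S_y$ and $\tilde f(x):=f(x)$ for $x\notin\bigcup_y S_y$. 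Disjointness makes the definition unambiguous, and $\tilde f$ is automatically surjective since $\tilde f(S_y)=\{y\}$ for every $y$.

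Granting such $S_y$, the verification is routine. Because $d(\tilde f(x),f(x))\leq K$ for every $x\in X$, one has $\omega_{\tilde f}(t)\leq \omega_f(t)+2K$, so $\tilde f$ is coarsely continuous. For the constant-$0$ co-coarse property, fix $x\in X$ and $y\in B(\tilde f(x),\epsilon)$; the triangle inequality gives $y\in B(f(x),\epsilon+K)$, and the coarse-quotient property of $f$ applied at scale $\epsilon+K$ yields $z'\in B(x,\delta(\epsilon+K))$ with $f(z')\in B(y,K)$, i.e.\ $z'\in A(y)$. The $1$-density of $S_y$ in $A(y)$ then provides $z\in S_y$ with $d(z,z')\leq 1$, whence $\tilde f(z)=y$ and $d(x,z)\leq 1+\delta(\epsilon+K)$, exactly as required.

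The substantive step is producing the disjoint family $\{S_y\}$. I would well-order $Y=\{y_\alpha:\alpha<\lambda\}$ with $\lambda$ the least ordinal of cardinality $\mathrm{card}(Y)$ and build the $S_\alpha$ by transfinite recursion: at stage $\alpha$, select a maximal $1$-separated subset of $A(y_\alpha)\setminus\bigcup_{\beta<\alpha}S_\beta$ that remains $1$-dense in $A(y_\alpha)$. The cardinality inequality $\mathrm{card}(X)\geq \mathrm{card}(Y)$ recorded by the author is exactly what makes this feasible: since each $S_\beta$ is $1$-separated in $X$, $|S_\beta|\leq \mathrm{dens}(X)$, so the forbidden set at stage $\alpha$ has cardinality at most $|\alpha|\cdot \mathrm{dens}(X)<\mathrm{card}(Y)\cdot \mathrm{dens}(X)\leq \mathrm{card}(X)$.

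The main obstacle I expect is precisely this transfinite step: verifying that after removing the forbidden set one can still extract a $1$-dense selection from $A(y_\alpha)$. Concretely, one must argue that every $1$-ball centered at a point of $A(y_\alpha)$ contains at least $\mathrm{card}(Y)$ elements of $A(y_\alpha)$, so that removing fewer than $\mathrm{card}(Y)$ previously chosen points does not destroy density. This will rest on a local application of the co-coarse condition (at scales controlled by $\omega_f(1)$) combined with the fact that balls in the infinite-dimensional Banach space $Y$ carry cardinality $\mathrm{card}(Y)$; if $Y$ is finite-dimensional the statement reduces to standard net arguments. Once that richness is in place, the cardinality bookkeeping drives the transfinite recursion cleanly to completion.
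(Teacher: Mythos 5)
Your strategy is genuinely different from the paper's. The paper extends $f$ restricted to a $1$-net $N$ of $X$ one point at a time: at each stage $\alpha$ indexed by a witness $(x_\alpha,\epsilon_\alpha,y_\alpha)$, it locates $u_\alpha\in N$ with $f(u_\alpha)$ within $K$ of $y_\alpha$, then picks a single fresh point $v_\alpha\in B(u_\alpha,1/2)$ not yet in the domain and declares $g(v_\alpha)=y_\alpha$. You instead try to carve out disjoint ``fibers'' $S_y\subset A(y):=f^{-1}(\overline{B}(y,K))$, each $1$-dense in $A(y)$, and redefine $\tilde f\equiv y$ on $S_y$. Granting those sets, your verification of coarse continuity and of the constant-$0$ co-coarse inclusion is correct.

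The gap is exactly where you flagged it, and I do not think it can be filled as stated. You need that for each $x\in A(y)$ the set $B(x,1)\cap A(y)$ has cardinality exceeding the forbidden set (you aim for $\geq\mathrm{card}(Y)$), but nothing in the definition of coarse quotient forces $A(y)$ to be locally thick inside $X$. The co-coarse condition controls the image of balls of radius $\delta(\epsilon)$ at the level of $Y$, but it does not prevent $f$ from sending most of $B(x,1)$ just outside $\overline{B}(y,K)$; indeed $f(B(x,1))\subset B(f(x),\omega_f(1))$, and when $d(f(x),y)$ is close to $K$ this image can lie almost entirely outside $\overline{B}(y,K)$, so $B(x,1)\cap A(y)$ could in principle be tiny. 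Your appeal to ``balls in $Y$ carry cardinality $\mathrm{card}(Y)$'' speaks to richness in $Y$, whereas what you need is richness of a particular preimage set inside $X$, which is a different matter. The paper avoids this issue entirely by never asking the new points to land in $A(y)$: it allows them to land anywhere in $B(u_\alpha,1/2)$, a genuine ball in the Banach space $X$, which automatically carries $\mathrm{card}(X)$ points, and pays only the price that the new map $g$ differs from $f$ by $\omega_f(1/2)+K$ rather than by $K$. If you relax your construction in the same spirit (choose $S_y$ from a $1/2$-neighborhood of $A(y)$, not from $A(y)$ itself, and pick one new point per demand rather than whole maximal separated families), you essentially recover the paper's argument; as written, the richness claim is an unproven and likely false step.
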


\begin{proof}
Let $f:X\rightarrow Y$ be a coarse quotient mapping and
$N$ be a 1-net of $X$. Since $N$ is coarsely equivalent to $X$, $f|_N:N\rightarrow Y$ is still
a coarse quotient mapping, say, with constant $K\geq0$. Thus
for every $\epsilon>0$ there exists $\delta=\delta(\epsilon)>0$ so that for every $x\in N$,
\begin{align}\label{CQN}
B(f(x),\epsilon)\subset f(B(x,\delta)\cap N)^K.
\end{align}
Consider the set
$$\Gamma=\{(x,\epsilon,y):x\in N,\epsilon\in\mathbb{Q},y\in B(f(x),\epsilon)\},$$
one has
\begin{align}\label{card}
\kappa:=\text{card}(\Gamma)\leq\max\{\text{card}(N)
,\text{card}(Y)\}\leq\text{card}(X).
\end{align}
Fix a well-ordering $\preceq$ of $\Gamma$ of order-type $\kappa$
(i.e., each element has strictly fewer than $\kappa$ predecessors),
we will define by transfinite induction on $\alpha\in\Gamma$
new mappings $g_\alpha\subset X\times Y$ such that
$f|_N\subset g_\alpha\subset g_\beta$ for all $\alpha\preceq\beta$ in $\Gamma$.
The desired mapping $g$ will be $\bigcup_{\alpha\in\Gamma}g_\alpha$, whose
domain $\text{dom}g:=\widetilde{N}$ contains $N$ as a subset.

Suppose $g_\beta$ has been defined for all $\beta\prec\alpha$.
By \eqref{CQN} there exist $\delta_\alpha=\delta(\epsilon_\alpha)>0$
and $u_\alpha\in B(x_\alpha,\delta_\alpha)\cap N$
so that $\|y_\alpha-f(u_\alpha)\|\leq K$.
Note that $\delta_\alpha\geq\|x_\alpha-u_\alpha\|\geq1$,
so replacing $\delta_\alpha$ by $2\delta_\alpha$
we get $B(u_\alpha,1/2)\subset B(x_\alpha,2\delta_\alpha)$.
In view of \eqref{card}, we can pick
$$v_\alpha\in B(u_\alpha,\frac{1}{2})\backslash\left(\bigcup_
{\beta\prec\alpha}\text{dom}g_\beta\cup N\right)$$
and define $g_\alpha$ by
$$g_\alpha=\bigcup_{\beta\prec\alpha}g_\beta\cup(v_\alpha,y_\alpha).$$

Clearly $g$ is surjective, and it follows from the choice of
$v_\alpha$ and the triangle inequality that
$$\|f(v_\alpha)-g(v_\alpha)\|\leq\omega_f(\frac{1}{2})+K,$$
so $g$ is coarsely continuous.
Moreover, $g$ satisfies the local surjectivity condition at points of $N$, i.e.,
for every $\epsilon>0$ there exists $\delta=\delta(\epsilon)>0$ so that for every $x\in N$,
$$B(g(x),\epsilon)\subset g(B(x,\delta)\cap\widetilde{N}).$$

The last step is to extend $g$ to all of $X$. Consider the selection mapping
$p:X\rightarrow\widetilde{N}$ defined as $p(x)=x$ for $x\in\widetilde{N}$ and $p(x)=u_x$
for $x\in X\backslash\widetilde{N}$, where $u_x$ is any point in $N$ within distance 1
from $x$. Then one can easily check that the composition
$g\circ p$ is a coarse quotient mapping with constant 0 from $X$ onto $Y$.
\end{proof}

As an application of Lemma \ref{Johnson}, we give the following corollary.

\begin{corollary}\label{banachcontain}
Let $X$, $Y$ and $Z$ be Banach spaces. Assume that $Y$ is a coarse quotient of $X$
and there is no coarse quotient mapping from any subset of $X$ to $Z$ that is Lipschitz
for large distances. Then $Z$ does not isomorphically embed into $Y$.
\end{corollary}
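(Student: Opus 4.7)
The natural approach is to argue by contrapositive: assume $Z$ isomorphically embeds into $Y$ via some linear embedding $T:Z\rightarrow Y$, and construct a coarse quotient mapping from a subset of $X$ to $Z$ that is Lipschitz for large distances. Since the proof from \cite{Bates}-type arguments in the uniform category works by pulling back the embedded copy through the quotient map, the plan is to do exactly this in the coarse setting. The main ingredient that makes the construction clean is Lemma \ref{Johnson}, which lets us first replace the given coarse quotient mapping $f:X\rightarrow Y$ by one with constant $K=0$, so that $f$ is surjective and satisfies a genuine local surjectivity statement $B(f(x),\epsilon)\subset f(B(x,\delta(\epsilon)))$.

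With this in hand, I would set $W=f^{-1}(T(Z))\subset X$ and define $g:W\rightarrow Z$ by $g=T^{-1}\circ(f|_W)$. Surjectivity of $g$ is immediate from surjectivity of $f$. For coarse continuity, one uses $\omega_g(t)\leq\|T^{-1}\|\omega_f(t)$, which is finite for every $t>0$ since $f$ is coarsely continuous. For co-coarse continuity (with constant $0$) I would argue as follows: given $\epsilon>0$ and $w\in W$, pick $\delta=\delta(\|T\|\epsilon)$ from the $K=0$ coarse quotient property of $f$; then for any $z\in B(g(w),\epsilon)$, the point $T(z)$ lies in $B(f(w),\|T\|\epsilon)$, so there is $w'\in B(w,\delta)$ with $f(w')=T(z)$, which forces $w'\in W$ and $g(w')=z$. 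This gives $B(g(w),\epsilon)\subset g(B(w,\delta)\cap W)$, so $g$ is a coarse quotient mapping.

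The Lipschitz-for-large-distances property comes from Proposition \ref{LipLarge}: since $X$ is a Banach space it is metrically convex, and $f$ is coarsely continuous, so $f$ is Lipschitz for large distances on $X$; composing with the bounded operator $T^{-1}$ preserves this property on $W$. Putting the three items together, $g$ is a coarse quotient mapping from the subset $W\subset X$ onto $Z$ that is Lipschitz for large distances, contradicting the hypothesis.

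I do not expect any real obstacle in this argument; the only point requiring slight care is the verification of co-coarse continuity of $g$, where one must use the $K=0$ reduction so that the preimage $w'$ produced by the coarse quotient condition on $f$ actually maps exactly into $T(Z)$ (and hence lies in $W$). If $f$ only had a positive constant $K$, the pulled-back point would only land within distance $K$ of $T(Z)$, and one would need an additional argument (or an extra density/separation step) to force it into $W$. Thus Lemma \ref{Johnson} is really the key technical reduction that makes this corollary a one-line consequence.
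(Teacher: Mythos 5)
Your proof is correct and follows essentially the same route as the paper: reduce to $K=0$ via Lemma \ref{Johnson}, restrict $f$ to the preimage of the embedded copy of $Z$, and compose with the isomorphism to get a contradiction. You simply spell out in more detail the verification (which the paper leaves as ``clearly'') that the restriction $f|_{f^{-1}(\widetilde{Z})}$ remains a coarse quotient mapping onto $\widetilde{Z}$ with constant $0$, and you correctly identify that this step is exactly where the $K=0$ reduction is indispensable.
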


\begin{proof}
Assume that $Y$ contains a subspace $\widetilde{Z}$ isomorphic to $Z$ and
the isomorphism is given by $T: \widetilde{Z}\rightarrow Z$. Let $f: X\rightarrow Y$
be a coarse quotient mapping with constant $K$. In view of Lemma \ref{Johnson} we may assume that $K=0$.
Consider the subset $S:=f^{-1}(\widetilde{Z})$ of $X$. Clearly
$f|_S: f^{-1}(\widetilde{Z})\rightarrow\widetilde{Z}$ is still a coarse quotient mapping with
constant 0, and as a restriction mapping it inherits the Lipschitz for large distances property.
Thus the composition $T\circ f|_S$ is a coarse quotient mapping from
$S$ to $Z$ that is Lipschitz for large distances, a contradiction.
\end{proof}

Corollary \ref{banachcontain} can be generalized to the metric space setting,
but the proof is more complicated since Lemma \ref{Johnson} does not apply to
general metric spaces.

\begin{theorem}\label{metriccontain}
Let $X$, $Y$ and $Z$ be metric spaces. Assume that $Y$ is a coarse quotient of $X$
and there is no coarse quotient mapping from any subset of $X$ to $Z$. Then $Z$
does not coarsely embed into $Y$.
\end{theorem}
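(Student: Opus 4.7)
The plan is to imitate the proof of Corollary \ref{banachcontain}: starting from a coarse embedding $h : Z \to Y$, I would produce a coarse quotient mapping $F$ from some subset $S \subset X$ onto (a dense part of) $Z$, which would contradict the hypothesis. What is new is that Lemma \ref{Johnson} is unavailable in the metric space setting, so one cannot first reduce $f$ to a coarse quotient with constant $0$; the slack $K$ must be tracked through the whole construction, and the output coarse quotient mapping will have some nonzero constant $K'$. This is fine because the hypothesis does not stipulate $K'=0$.

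Concretely, suppose $h : Z \to Y$ is a coarse embedding with coarse inverse $g : h(Z) \to Z$ satisfying $d(g(h(z)), z) \leq M$ for every $z \in Z$, and let $f : X \to Y$ be a coarse quotient mapping with constant $K$. I would set $S := \{ x \in X : d(f(x), h(Z)) \leq K\}$; since $f(X)$ is $K$-dense in $Y$, one has $h(Z) \subset f(S)^{K}$, and in particular $S \neq \emptyset$. Using the axiom of choice, for each $x \in S$ pick $F(x) \in Z$ with $d(f(x), h(F(x))) \leq K$. For coarse continuity of $F$, the triangle inequality gives $d(h(F(x)), h(F(x'))) \leq 2K + \omega_f(d(x, x'))$, and then applying $g$ together with the estimate $d(g \circ h, \mathrm{id}_Z) \leq M$ yields the finite bound $d(F(x), F(x')) \leq 2M + \omega_g(2K + \omega_f(d(x, x')))$, so $\omega_F$ is finite.

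For the co-coarse continuity, fix $\epsilon > 0$ and $x \in S$, and take any $z \in B(F(x), \epsilon)$. Then $d(f(x), h(z)) \leq K + \omega_h(\epsilon)$, so the co-coarse continuity of $f$ yields $\delta = \delta(K + \omega_h(\epsilon)) > 0$ and $x' \in B(x, \delta)$ with $d(f(x'), h(z)) \leq K$; in particular $x' \in S$. Consequently $d(h(F(x')), h(z)) \leq 2K$, and another application of $g$ gives $d(F(x'), z) \leq 2M + \omega_g(2K) =: K'$. Hence $B(F(x), \epsilon) \subset F(B(x, \delta) \cap S)^{K'}$, so $F : S \to Z$ is a coarse quotient mapping with constant $K'$, contradicting the hypothesis. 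The main subtlety is the bookkeeping around the selection $F$: it is highly non-canonical, but the coarse embedding supplies a lower bound (encoded in $\omega_g$) that forces any two valid choices at the same point of $S$ to differ by at most $\omega_g(2K) + 2M$ in $Z$. That uniformity is what makes $F$ simultaneously coarsely continuous and co-coarsely continuous; no other step is genuinely hard.
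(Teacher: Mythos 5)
Your proof is correct and follows essentially the same route as the paper: both arguments restrict $f$ to the preimage $S$ of a $K$-neighborhood of the embedded copy of $Z$ and then post-compose with a selection plus the coarse inverse of the embedding to land in $Z$. Your version is marginally more economical — by applying the co-coarse continuity of $f$ directly to the point $h(z)\in h(Z)$ rather than to a nearby point of $h(Z)^K$, you get membership of $x'$ in $S$ for free and avoid the paper's second application of co-coarse continuity — but the underlying decomposition and the final contradiction are the same.
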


\begin{proof}
Assume that $\widetilde{Z}$ is a subset of $Y$ that is coarsely equivalent to $Z$ and
the coarse equivalence is given by $T: \widetilde{Z}\rightarrow Z$. Let $f: X\rightarrow Y$
be a coarse quotient mapping with constant $K$.
Consider the subset $S:=f^{-1}(\widetilde{Z}^K)$ of $X$. We claim that the restriction mapping
$f|_S: f^{-1}(\widetilde{Z}^K)\rightarrow\widetilde{Z}^K$ is a coarse quotient mapping.

Indeed, let $x\in S$ and $\epsilon>0$.
Since $f: X\rightarrow Y$ is co-coarsely continuous, there exists $\delta=\delta(\epsilon)>0$ so that
\eqref{CQinclu} holds. Thus for every $y\in B(f(x),\epsilon)\cap\widetilde{Z}^K$
there exists $u\in B(x,\delta)$ such that $d(y, f(u))\leq K$. On the
other hand, $y\in\widetilde{Z}^K$ implies that $d(y,z)\leq K$ for some $z\in\widetilde{Z}$, so the
triangle inequality gives that $z\in B(f(u),2K)$. Now again apply the definition of co-coarse continuity
of $f$ to the point $u$, there exists a constant $\widetilde{K}>0$ depending only on $K$ such that
$$B(f(u),2K)\subset f(B(u,\widetilde{K}))^K.$$
Thus there exists $v\in B(u,\widetilde{K})$ such that $d(z,f(v))\leq K$. Then clearly we have
$d(y,f(v))\leq4K$ with $v\in B(x,\widetilde{K}+\delta)\cap f^{-1}(\widetilde{Z}^K)$. So we have shown that
$$B(f(x),\epsilon)\cap\widetilde{Z}^K\subset f(B(x,\widetilde{K}+\delta)\cap f^{-1}(\widetilde{Z}^K))^{4K}.$$
This implies that the mapping $f|_S$ from $f^{-1}(\widetilde{Z}^K)$ to $\widetilde{Z}^K$ is co-coarsely continuous.
Moreover, as a restriction mapping it inherits the property of coarse continuity, so it is
a coarse quotient mapping.

In addition, it is easy to see that $\widetilde{Z}^K$ is coarsely equivalent to $\widetilde{Z}$, and the
coarse equivalence is given by the selection mapping $p:\widetilde{Z}^K\rightarrow\widetilde{Z}$,
$p(a)=z_a$, where $z_a$ is any point in $\widetilde{Z}$ within distance $K$ from $a\in\widetilde{Z}^K$.
Then the composition $T\circ p\circ f|_S$ is a coarse quotient mapping from $S$ to $Z$, a contradiction.
\end{proof}

Recall that two metric spaces $X$ and $Y$ are said to be Lipschitz equivalent if there exists
a one-to-one Lipschitz mapping $f$ from $X$ onto $Y$ whose inverse is also Lipschitz. In this
case $f$ is called a Lipschitz homeomorphism. If $X$ is Lipschitz equivalent to a
subset of $Y$, we say that $X$ Lipschitz embeds into $Y$.

\begin{corollary}\label{metriclip}
Let $X$, $Y$ and $Z$ be metric spaces. Assume that there is a coarse quotient mapping from
$X$ to $Y$ that is Lipschitz for large distances and there is no coarse quotient mapping
from any subset of $X$ to $Z$ that is Lipschitz for large distances.
Then $Z$ does not Lipschitz embed into $Y$.
\end{corollary}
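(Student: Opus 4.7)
The plan is to prove the statement by contradiction, essentially rerunning the argument of Theorem \ref{metriccontain} while carefully tracking the ``Lipschitz for large distances'' property at every stage. Suppose $Z$ Lipschitz embeds into $Y$: pick $\widetilde{Z}\subset Y$ and a Lipschitz homeomorphism $T:\widetilde{Z}\to Z$. Let $f:X\to Y$ be a coarse quotient mapping with constant $K$ that is Lipschitz for large distances. The goal is to produce a coarse quotient mapping from some subset of $X$ to $Z$ that is Lipschitz for large distances, contradicting the hypothesis. The candidate, as in Theorem \ref{metriccontain}, is $T\circ p\circ f|_S$, where $S:=f^{-1}(\widetilde{Z}^K)$ and $p:\widetilde{Z}^K\to\widetilde{Z}$ is a selection mapping sending each $a$ to some $z_a\in\widetilde{Z}$ with $d(a,z_a)\leq K$.

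First I would invoke the calculation inside the proof of Theorem \ref{metriccontain} verbatim to get that $f|_S:S\to\widetilde{Z}^K$ is a coarse quotient mapping. The restriction of $f$ to $S$ trivially inherits the Lipschitz for large distances property: if $d(x_1,x_2)\geq d$ with $x_1,x_2\in S$, then $d(f(x_1),f(x_2))\leq l(d)d(x_1,x_2)$ with the same $l(d)$ coming from $f$. Next, I would observe that the selection mapping $p$ satisfies
\[
d(p(a),p(b))\leq d(a,b)+2K\qquad\text{for all }a,b\in\widetilde{Z}^K,
\]
so that whenever $d(a,b)\geq K$, we have $d(p(a),p(b))\leq 3d(a,b)$. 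Hence $p$ is Lipschitz for large distances (and, as noted in Theorem \ref{metriccontain}, it is a coarse equivalence, hence a coarse quotient mapping). Finally $T$ is globally Lipschitz, hence trivially both a coarse quotient mapping and Lipschitz for large distances.

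The remaining task is to check that the composition $T\circ p\circ f|_S:S\to Z$ is simultaneously a coarse quotient mapping and Lipschitz for large distances. The coarse quotient property follows because composition of coarse quotient mappings is a coarse quotient mapping, as noted in the paper. For the Lipschitz-for-large-distances property, given $d>0$ and $x_1,x_2\in S$ with $d(x_1,x_2)\geq d$, one has $d(f(x_1),f(x_2))\leq l(d)d(x_1,x_2)$. Depending on whether $d(f(x_1),f(x_2))\geq K$ or not, one bounds $d(p\circ f(x_1),p\circ f(x_2))$ either by $3l(d)d(x_1,x_2)$ or by $3K\leq(3K/d)d(x_1,x_2)$; in either case one gets a linear bound in $d(x_1,x_2)$. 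Applying the global Lipschitz constant of $T$ finishes the estimate.

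The main (very mild) obstacle is the verification that the composition of two maps that are only Lipschitz for large distances remains Lipschitz for large distances, since the intermediate map may produce small outputs; this is handled by the case split above and by the fact that bounded outputs can always be re-absorbed into a linear bound once $d(x_1,x_2)\geq d$. With that composition lemma in hand, $T\circ p\circ f|_S$ is the sought-after coarse quotient mapping from the subset $S\subset X$ onto $Z$ that is Lipschitz for large distances, contradicting the hypothesis and completing the proof.
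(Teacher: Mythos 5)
Your proof is correct and follows the same route as the paper: reuse the map $T\circ p\circ f|_{S}$ from Theorem \ref{metriccontain} and observe that it is additionally Lipschitz for large distances. The paper states this final observation without justification, whereas you usefully spell out the small case split (on whether $d(f(x_1),f(x_2))\geq K$) showing that composition preserves the Lipschitz-for-large-distances property.
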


\begin{proof}
Assume that $\widetilde{Z}$ is a subset of $Y$ that is Lipschitz equivalent to $Z$ and
the Lipschitz homeomorphism is given by $T: \widetilde{Z}\rightarrow Z$. Let $f:X\rightarrow Y$
be a coarse quotient mapping that is Lipschitz for large distances. Other notations are as in
the proof of Theorem \ref{metriccontain}. Note that the composition $T\circ p\circ f|_S$ is a
coarse quotient mapping from $S$ to $Z$ that is Lipschitz for large distances, so again
we get a contradiction.
\end{proof}

\section{Coarse quotient mappings and Ultrapowers}

Using a tool called the uniform approximation by affine property
(UAAP), Bates, Johnson, Lindenstrauss, Preiss and Schechtman showed
in \cite{Bates} the following:

\begin{theorem}\label{Bates}
$X$ and $Y$ are two Banach spaces. Assume that $X$ is
super-reflexive and $Y$ is a uniform quotient of $X$. Then $Y^*$ is
crudely finitely representable in $X^*$. Consequently, $Y$ is
isomorphic to a linear quotient of some ultrapower of $X$.
\end{theorem}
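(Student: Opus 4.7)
The plan is to follow the uniform approximation by affine property (UAAP) argument of \cite{Bates}. The first step will be to exploit super-reflexivity of $X$ to produce, for every $\epsilon>0$ and every $R>0$, a ball $B(x_0,r)\subset X$ with $r\geq R$, and a bounded linear operator $T\colon X\to Y$, such that
$$\sup_{h\in B(0,r)}\|f(x_0+h)-f(x_0)-T(h)\|\leq \epsilon r.$$
Since a uniform quotient $f$ is automatically both Lipschitz for large distances and co-Lipschitz for large distances (Proposition \ref{LipLarge} together with its co-version), say with constants $L_0$ and $\delta_0$, the rescaled map $g(h):=r^{-1}(f(x_0+rh)-f(x_0))$ will be $\epsilon$-close to $T$ on the unit ball while still inheriting Lipschitz constant at most $L_0$ and an approximate co-Lipschitz estimate $g(B_X)\supset (\delta_0-\epsilon)B_Y$.

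The second step will be to convert the approximate quotient behavior of $g$ into exact linear data. A standard perturbation argument on $B_X$ forces $\|T\|\leq L_0+\epsilon$ and $T(B_X)\supset (\delta_0-2\epsilon)B_Y$. Choosing sequences $\epsilon_n\to 0$ and $R_n\to\infty$ therefore produces linear operators $T_n\colon X\to Y$ which are uniformly bounded and uniformly open. Taking the ultralimit along a free ultrafilter $\mathcal U$ yields a bounded linear operator $\widehat T\colon X_{\mathcal U}\to Y_{\mathcal U}$ satisfying $\widehat T(B_{X_{\mathcal U}})\supset \delta_0 B_{Y_{\mathcal U}}$; in particular $\widehat T$ is surjective and hence a linear quotient mapping.

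The third step extracts the two conclusions. Since $X$ is super-reflexive, $X_{\mathcal U}$ is reflexive and $(X_{\mathcal U})^*$ is isometric to $(X^*)_{\mathcal U}$. The adjoint $\widehat T^*$ is an isomorphic embedding $(Y_{\mathcal U})^*\hookrightarrow (X^*)_{\mathcal U}$; restricting to the canonical isometric copy of $Y^*$ inside $(Y_{\mathcal U})^*$ exhibits $Y^*$ as an isomorphic subspace of an ultrapower of $X^*$, which by standard ultraproduct considerations is equivalent to $Y^*$ being crudely finitely representable in $X^*$. For the \emph{consequently} assertion, one dualizes back: the embedding $Y^*\hookrightarrow (X^*)_{\mathcal U}\cong (X_{\mathcal U})^*$ together with reflexivity of $X_{\mathcal U}$ yields an isomorphism of $Y$ onto a linear quotient of $X_{\mathcal U}$.

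The main obstacle, and the heart of the argument in \cite{Bates}, lies in the first two steps: producing the UAAP in a form whose linear parts $T_n$ are \emph{simultaneously} uniformly bounded and uniformly open. Uniform boundedness is essentially inherited from the Lipschitz-for-large-distances property of $f$, but controlling the openness of $T_n$ requires that the approximation error $\epsilon r$ be small enough compared to $\delta_0$ to avoid destroying the co-Lipschitz behavior under perturbation; this is precisely where super-reflexivity enters, via the uniform smoothness/convexity renorming that drives the UAAP itself.
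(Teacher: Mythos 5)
The paper does not prove Theorem~\ref{Bates}: it is stated as a black-box result quoted from \cite{Bates}, and the paper's own contribution is the coarse analogue (Theorem~\ref{CQ FR}), obtained by first passing to ultrapowers via Theorem~\ref{ultra} and then invoking Theorem~\ref{Bates}. So there is no proof in the paper against which to compare your argument; what can be assessed is whether your sketch is a sound reconstruction of the argument from \cite{Bates}.

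There is a genuine gap in your Step~1/Step~2. You claim that the rescaled map $g(h)=r^{-1}\bigl(f(x_0+rh)-f(x_0)\bigr)$ ``inherits Lipschitz constant at most $L_0$.'' This is false at small scales: a uniform quotient $f$ is uniformly continuous, hence (by Proposition~\ref{LipLarge}) Lipschitz only for distances bounded away from zero. For $\|h_1-h_2\|=s<1/r$ one only controls $\|g(h_1)-g(h_2)\|$ by $\omega_f(rs)/r$, which need not be $\leq L_0 s$. Since the UAAP of \cite{Bates} is a statement about genuinely Lipschitz maps, you cannot apply it directly to $g$ (or to $f$) on a ball. The correct order of operations — and the one \cite{Bates} uses, which the present paper replicates in the coarse setting through Theorems~\ref{ultra} and~\ref{CQ FR} — is to pass to ultrapowers \emph{first}: define $f_n(x)=f(nx)/n$ and $T\bigl((x_n)_{\mathcal U}\bigr)=(f_n(x_n))_{\mathcal U}$, which produces a genuine Lipschitz quotient map $T\colon X_{\mathcal U}\to Y_{\mathcal U}$ because the sub-unit-scale defects $\omega_f(rs)/r$ vanish in the ultralimit. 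Only then is the UAAP applied (to $T$, on the super-reflexive space $X_{\mathcal U}$), yielding the linear quotient $\widehat T\colon (X_{\mathcal U})_{\mathcal V}\to (Y_{\mathcal U})_{\mathcal V}$. Your Step~3 (dualizing, using $(X_{\mathcal U})^*\cong (X^*)_{\mathcal U}$ from super-reflexivity, the canonical isometric embedding $Y^*\hookrightarrow (Y_{\mathcal U})^*$, and the equivalence between crude finite representability and embeddability into an ultrapower) is correct once this repair is made.

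A smaller omission: in the final dualization you need $Y$ reflexive to pass from ``$Y^{**}$ is a linear quotient of $X_{\mathcal U}$'' to ``$Y$ is a linear quotient of $X_{\mathcal U}$.'' This does follow, but only \emph{after} one has established that $Y^*$ is crudely finitely representable in the super-reflexive space $X^*$ (hence $Y^*$, and so $Y$, is super-reflexive); it is worth making that logical dependence explicit.
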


One of the consequence of Theorem \ref{Bates} is that every uniform
quotient of $L_p$, $1<p<\infty$, is isomorphic to a linear quotient
of $L_p$. In this section, we will show that similar results hold
for coarse quotient mappings. The following lemma plays a key role;
it says that up to a constant, co-coarsely continuous mappings are
co-Lipschitz for large distances provided the target space is
metrically convex.

\begin{lemma}\label{core}
Let $X$ and $Y$ be two metric spaces and $f:X\rightarrow Y$ be a
co-coarsely continuous mapping with constant $K$. Assume that $Y$ is metrically convex.
Then for every $\epsilon>2K$, there exists $C=C(\epsilon)>0$ so that for all
$x\in X$ and $r\geq\epsilon$,
\begin{align}
\label{colip}B(f(x),r)\subset f(B(x,\frac{r}{C}))^K.
\end{align}
\end{lemma}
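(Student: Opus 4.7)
The plan is to mimic the standard argument that a coarsely continuous mapping from a metrically convex space is Lipschitz for large distances (Proposition \ref{LipLarge}), but in the inverse direction, with the metric convexity hypothesis moved to the target space $Y$. Fix $\epsilon > 2K$ and let $\delta = \delta(\epsilon)$ be the constant furnished by the co-coarse continuity of $f$. Given $x \in X$, $r \geq \epsilon$, and $y \in B(f(x), r)$, the goal is to produce a point $x_n \in B(x, r/C)$ with $d(f(x_n), y) \leq K$, where $C = C(\epsilon)$ is to be specified.

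Using metric convexity of $Y$ iteratively, I would choose $n = \lceil 2r/\epsilon \rceil$ and construct intermediate points $y_0 = f(x), y_1, \ldots, y_n = y$ with $d(y_i, y_{i+1}) = d(f(x), y)/n \leq \epsilon/2$. Then I would lift to a chain $x_0 = x, x_1, \ldots, x_n$ in $X$ by induction: given $x_i$ with $d(f(x_i), y_i) \leq K$, the triangle inequality yields $d(f(x_i), y_{i+1}) \leq K + \epsilon/2 \leq \epsilon$, so $y_{i+1} \in B(f(x_i), \epsilon) \subset f(B(x_i, \delta))^K$ by co-coarse continuity, and one may pick $x_{i+1} \in B(x_i, \delta)$ with $d(f(x_{i+1}), y_{i+1}) \leq K$.

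Summing the telescoping estimates gives $d(x, x_n) \leq n\delta \leq (2r/\epsilon + 1)\delta \leq 3\delta r/\epsilon$, where the last inequality uses $r \geq \epsilon$. Setting $C := \epsilon/(3\delta)$ yields $x_n \in B(x, r/C)$ with $d(f(x_n), y) \leq K$, so $y \in f(B(x, r/C))^K$, and \eqref{colip} follows since $y \in B(f(x), r)$ was arbitrary. The main subtlety is the step-size calibration: the cumulative $K$-error at each stage together with the single-step distance $d(y_i, y_{i+1})$ must remain below the $\epsilon$ governing co-coarse continuity. This is precisely where the hypothesis $\epsilon > 2K$ enters, since step size $\epsilon/2$ keeps $K + \epsilon/2 \leq \epsilon$; if one only assumed $\epsilon > K$, a more delicate choice of step size and radius would be required, but the simple threshold $2K$ makes everything clean.
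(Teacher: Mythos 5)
Your proof is correct and follows essentially the same strategy as the paper: fix $\epsilon > 2K$, invoke co-coarse continuity to get $\delta$, use metric convexity of $Y$ to subdivide the segment from $f(x)$ to $y$ into pieces of length at most $\epsilon/2$, and then inductively lift the chain through points of $X$ at distance at most $\delta$ per step, using $\epsilon > 2K$ to keep the cumulative error within the $\epsilon$-window where co-coarse continuity applies. The only cosmetic difference is bookkeeping: the paper first establishes the claim $B(f(x),n\epsilon)\subset f(B(x,2n\delta))^K$ for integers $n$ and then compares $r$ to a suitable multiple of $\epsilon$ (arriving at $C=\epsilon/4\delta$), while you parametrize directly by $n=\lceil 2r/\epsilon\rceil$ and get $C=\epsilon/3\delta$; both are valid.
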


\begin{proof}
By the definition of co-coarsely continuous mapping, for
$\epsilon>2K$ there exists $\delta=\delta(\epsilon)>0$ so that
$B(f(x),\epsilon)\subset f(B(x,\delta))^K$ for all $x\in X$. We
claim that given $n\in\mathbb{N}$, for every $x\in X$ one has
\begin{align}
\label{n}B(f(x),n\epsilon)\subset f(B(x,2n\delta))^K.
\end{align}
Indeed, let $y\in B(f(x),n\epsilon)$. By the metric convexity of
$Y$, we can find $\{u_i\}_{i=0}^{2n}$ such that $u_0=f(x)$,
$u_{2n}=y$ and $d(u_i,u_{i-1})\leq\epsilon/2$ for all $i$. Put
$z_0=x$. The definition of co-coarsely continuous mapping yields
$z_1\in B(x,\delta)$ with $d(u_1,f(z_1))\leq K$, so triangle
inequality gives $d(u_2,f(z_1))\leq K+\epsilon/2<\epsilon$. Again by
definition there exists $z_2\in B(z_1,\delta)$ with
$d(u_2,f(z_2))\leq K$, and hence $d(u_3,f(z_2))\leq
K+\epsilon/2<\epsilon$...This process gives $\{z_i\}_{i=1}^{2n}$
inductively such that $d(z_i,z_{i-1})\leq\delta$ and
$d(u_i,f(z_i))\leq K$ for all $i$. Therefore $d(y,f(z_{2n}))\leq K$
and $z_{2n}\in B(x,2n\delta)$.

Now we claim that $C$ can be chosen as
$\epsilon/4\delta$. Indeed, for $r\geq\epsilon$, let
$k\in\mathbb{N}$ satisfy $k-1\leq r/\epsilon<k$. Note that $k\geq2$,
so by \eqref{n},
$$B(f(x),r)\subset B(f(x),k\epsilon)\subset
f(B(x,2k\delta))^K\subset f(B(x,\frac{r}{C}))^K.$$
\end{proof}

The next theorem states that in the Banach space setting one can
pass from coarse quotients to Lipschitz quotients by taking ultrapowers.
We refer to \cite{up} for ultraproduct techniques in Banach space theory.

\begin{theorem}\label{ultra}
Let $X$ and $Y$ be Banach spaces and $\mathcal{U}$ be a free ultrafilter on the natural numbers
$\mathbb{N}$. If $Y$ is a coarse quotient of $X$, then $Y_{\mathcal {U}}$ is a Lipschitz quotient of
$X_{\mathcal {U}}$.
\end{theorem}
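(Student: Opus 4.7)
The plan is to build a Lipschitz quotient mapping $\tilde f:X_{\mathcal U}\to Y_{\mathcal U}$ via the canonical rescaled lift $\tilde f([x_n]) := [f(nx_n)/n]$. The role of the factor $n$ is to push the relevant distances into the ``large'' regime, where Proposition \ref{LipLarge} converts coarse continuity into a genuine Lipschitz bound and Lemma \ref{core} converts co-coarse continuity into a genuine co-Lipschitz bound; both results apply because the Banach spaces $X$ and $Y$ are metrically convex. The first step is to record two constants: choose $L$ via Proposition \ref{LipLarge} with $\epsilon=1$ so that $\|f(u)-f(v)\|\leq L\|u-v\|$ whenever $\|u-v\|\geq 1$, and choose $C$ via Lemma \ref{core} with $\epsilon_0:=2K+1$ so that $B(f(x),s)\subset f(B(x,s/C))^K$ for every $x\in X$ and every $s\geq\epsilon_0$.

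Next I would check that $\tilde f$ is well-defined and Lipschitz by a single two-case bound: for every $n$, if $n\|x_n-y_n\|\geq 1$ then $\|f(nx_n)-f(ny_n)\|/n \leq L\|x_n-y_n\|$ by the choice of $L$, while if $n\|x_n-y_n\|<1$ then $\|f(nx_n)-f(ny_n)\|/n \leq \omega_f(1)/n$ by coarse continuity; hence $\|f(nx_n)-f(ny_n)\|/n \leq L\|x_n-y_n\|+\omega_f(1)/n$ always. Applied to $y_n=0$ and a bounded $(x_n)$ this shows the image sequence $(f(nx_n)/n)$ is bounded, so $\tilde f([x_n])\in Y_{\mathcal U}$; applied to two equivalent representatives $(x_n),(x_n')$ it shows that $\tilde f$ is independent of the choice; and applied in general it yields $\|\tilde f([x_n])-\tilde f([y_n])\|_{\mathcal U}\leq L\|[x_n]-[y_n]\|_{\mathcal U}$ in the ultralimit, since the $\omega_f(1)/n$ term vanishes along $\mathcal U$.

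For the co-Lipschitz property, fix $\tilde x=[x_n]$ and $\tilde y=[y_n]\in B(\tilde f(\tilde x),r)$ with $r>0$. For any $\eta>0$ and for $\mathcal U$-almost every $n$, $\|ny_n-f(nx_n)\|\leq n(r+\eta)$, so once $n(r+\eta)\geq\epsilon_0$ Lemma \ref{core} produces $u_n\in B(nx_n,n(r+\eta)/C)$ with $\|f(u_n)-ny_n\|\leq K$. Setting $z_n:=u_n/n$ gives $\|z_n-x_n\|\leq(r+\eta)/C$ and $\|f(nz_n)/n-y_n\|\leq K/n$, hence $[z_n]\in B(\tilde x,(r+\eta)/C)$ and $\tilde f([z_n])=[y_n]$; taking $\eta=r$ yields $B(\tilde f(\tilde x),r)\subset \tilde f(B(\tilde x,2r/C))$. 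Surjectivity then follows from this inclusion combined with $\tilde f(0)=0$ (since $\|f(0)/n\|\to 0$), because any $\tilde y\in Y_{\mathcal U}$ with $\|\tilde y\|\leq R$ lies in $B(0,R)\subset\tilde f(B(0,2R/C))$. The main delicate point I anticipate is precisely the bookkeeping in this co-Lipschitz step: the $u_n$ are chosen only pointwise for each $n$, and what makes the argument work is that the $K/n$ approximation error, as well as the $\omega_f(1)/n$ defect from the Lipschitz step, both dissolve in the ultralimit thanks to the division by $n$ built into the rescaling.
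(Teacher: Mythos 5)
Your proof is correct, and it follows the same rescaling strategy as the paper --- define $\tilde f([x_n]) = [f(nx_n)/n]$, then use Proposition \ref{LipLarge} and Lemma \ref{core} to upgrade coarse/co-coarse continuity to Lipschitz/co-Lipschitz bounds in the large-distance regime, with the small-distance defects dissolving in the ultralimit. There is one genuine difference worth flagging: the paper first invokes Lemma \ref{Johnson} to replace $f$ by a coarse quotient map with constant $K=0$, whereas you keep the original $K>0$ and observe directly that the $K$-approximation error, after rescaling, becomes $K/n$ and hence vanishes along $\mathcal{U}$. This is a simplification: Lemma \ref{Johnson} is a nontrivial transfinite-induction argument, and your version shows it is not actually needed for Theorem \ref{ultra}. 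The tradeoff is only cosmetic --- your co-Lipschitz constant comes out as $2/C$ (from setting $\eta = r$) rather than $C$, but that loss of a factor of $2$ is immaterial, and in fact your handling of the $\eta>0$ slack is spelled out more carefully than in the paper, which implicitly glosses over the passage from ``$\lim_{\mathcal U}\| y_n - f_n(x_n)\|\leq r$'' to the pointwise estimate needed to apply the co-Lipschitz inclusion coordinatewise.
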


\begin{proof}
Let $f$ be a coarse quotient mapping from $X$ to $Y$ with constant
$K$. In view of Lemma \ref{Johnson} we may assume that $K=0$.
By Proposition \ref{LipLarge} and Lemma \ref{core},
there are constants $L>0$ and $C>0$ such that
for all $x\in X$ and $r\geq1$,
\begin{align}
&f(B(x,r))\subset B(f(x),Lr),\nonumber\\
&B(f(x),r)\subset f(B(x,\frac{r}{C})).\nonumber
\end{align}
For each $n\in\mathbb{N}$, define $f_n: X\rightarrow Y$ by
$f_n(x)=f(nx)/n$. Then for all $x\in X$ and $r\geq1/n$,
\begin{align}
&f_n(B(x,r))\subset B(f_n(x),Lr),\nonumber\\
&B(f_n(x),r)\subset f_n(B(x,\frac{r}{C})).\nonumber
\end{align}
Define $T: X_\mathcal {U}\rightarrow Y_\mathcal {U}$ by
$T((x_n)_\mathcal {U})=(f_n(x_n))_\mathcal {U}$ for
$\tilde{x}=(x_n)_\mathcal {U}\in X_{\mathcal {U}}$.
Then it follows easily that for each $\tilde{x}\in X_\mathcal {U}$ and $r>0$,
\begin{align}
&T(B(\tilde{x},r))\subset B(T\tilde{x},Lr),\nonumber\\
&B(T\tilde{x},r)\subset T(B(\tilde{x},\frac{r}{C})).\nonumber
\end{align}
Therefore $T$ is a Lipschitz quotient mapping from $X_{\mathcal {U}}$ onto $Y_{\mathcal {U}}$.
\end{proof}

Recall that a Banach space $X$ is said to be crudely finitely
representable in a Banach space $Y$ if there exists
$1<\lambda<\infty$ so that for any finite-dimensional subspace
$E\subset X$, there exists a finite-dimensional subspace $F\subset
Y$ such that $d_{BM}(E,F)<\lambda$, where $d_{BM}$ is the
Banach-Mazur distance defined by
$$d_{BM}(E,F)=\inf\{\|T\|\|T^{-1}\|:T:E\rightarrow F~\text{is an isomorphism}\}.$$
If this is true for every $\lambda>1$, $X$ is said to be finitely
representable in $Y$.

\begin{theorem}\label{CQ FR}
$X$ and $Y$ are two Banach spaces. Assume that $X$ is
super-reflexive and $Y$ is a coarse quotient of $X$. Then $Y^*$ is
crudely finitely representable in $X^*$. Consequently, $Y$ is
isomorphic to a linear quotient of some ultrapower of $X$.
\end{theorem}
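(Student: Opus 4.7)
The plan is to reduce to Theorem \ref{Bates} via the ultrapower construction of Theorem \ref{ultra}, and then descend from the ultrapowers back to $X$ and $Y$ using the hypothesis that $X$ is super-reflexive. Concretely, Theorem \ref{ultra} produces a Lipschitz quotient mapping $X_{\mathcal{U}}\to Y_{\mathcal{U}}$, which is in particular a uniform quotient. Since super-reflexivity is preserved by ultrapowers, $X_{\mathcal{U}}$ is super-reflexive, so Theorem \ref{Bates} applies to the pair $(X_{\mathcal{U}},Y_{\mathcal{U}})$ and yields that $(Y_{\mathcal{U}})^*$ is crudely finitely representable in $(X_{\mathcal{U}})^*$.

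The next step is to transfer this crude finite representability from the ultrapower level down to the original spaces, and this is where the main technical care is required. On the $Y$-side, the canonical diagonal inclusion $Y^*\hookrightarrow(Y^*)_{\mathcal{U}}$ followed by the isometric embedding $(Y^*)_{\mathcal{U}}\hookrightarrow(Y_{\mathcal{U}})^*$, given by $(y_n^*)_{\mathcal{U}}((y_n)_{\mathcal{U}})=\lim_{\mathcal{U}}y_n^*(y_n)$, realizes $Y^*$ as an isometric subspace of $(Y_{\mathcal{U}})^*$. On the $X$-side, the key point is that when $X$ is super-reflexive the canonical map $(X^*)_{\mathcal{U}}\to(X_{\mathcal{U}})^*$ is an isometric isomorphism, and $(X^*)_{\mathcal{U}}$ is in turn finitely representable in $X^*$. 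Chaining these identifications through the crude finite representability obtained at the ultrapower level, every finite-dimensional subspace of $Y^*$ is $\lambda$-isomorphic to a subspace of $X^*$ for some fixed $\lambda$, which is the first assertion.

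For the \emph{consequently} clause, crude finite representability of $Y^*$ in $X^*$ yields, by a standard ultraproduct compactness argument (see \cite{up}), an isomorphic embedding $Y^*\hookrightarrow(X^*)_{\mathcal{V}}$ for a suitable free ultrafilter $\mathcal{V}$. Using super-reflexivity once more to identify $(X^*)_{\mathcal{V}}$ with $(X_{\mathcal{V}})^*$, and arranging this embedding to be weak*-to-weak* continuous via a principle of local reflexivity argument, a preduality argument produces the desired linear quotient mapping from $X_{\mathcal{V}}$ onto $Y$. The main obstacle in the whole proof is keeping the ultrapower identifications straight: the super-reflexivity of $X$ is essential for the identification $(X^*)_{\mathcal{U}}\cong(X_{\mathcal{U}})^*$, which is precisely what permits the descent from the ultrapower level back to the original dual $X^*$; the remainder of the argument is bookkeeping on top of the nonlinear content already packaged into Theorems \ref{ultra} and \ref{Bates}.
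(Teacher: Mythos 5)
Your proposal follows the same route as the paper: apply Theorem \ref{ultra} to pass to a Lipschitz quotient mapping $X_{\mathcal{U}}\to Y_{\mathcal{U}}$, invoke Theorem \ref{Bates} on the super-reflexive ultrapower $X_{\mathcal{U}}$ to get $(Y_{\mathcal{U}})^*$ crudely finitely representable in $(X_{\mathcal{U}})^*$, and then descend via the canonical inclusion $Y^*\hookrightarrow(Y_{\mathcal{U}})^*$ together with the identification $(X_{\mathcal{U}})^*\cong(X^*)_{\mathcal{U}}$ (finitely representable in $X^*$). The paper's own proof is the same chain of identifications, stated more tersely; your extra remarks about the \emph{consequently} clause spell out what the paper leaves implicit.
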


\begin{proof}
Let $\mathcal {U}$ be a free ultrafilter on $\mathbb{N}$. By Theorem
\ref{ultra} $Y_{\mathcal {U}}$ is a Lipschitz quotient of
$X_{\mathcal {U}}$. Note that $X_{\mathcal {U}}$ is super-reflexive,
so applying Theorem \ref{Bates} we get $(Y^*)_{\mathcal
{U}}=(Y_{\mathcal {U}})^*$ is crudely finitely representable in
$(X_{\mathcal {U}})^*=(X^*)_{\mathcal {U}}$. Also note that $Y^*$
can be viewed as a subspace of $(Y^*)_{\mathcal {U}}$ and
$(X^*)_{\mathcal {U}}$ is finitely representable in $X^*$, so we are
done.
\end{proof}

\begin{corollary}\label{CQ Hilbert}
A Banach space that is a coarse quotient of a Hilbert space must be
isomorphic to a Hilbert space.
\end{corollary}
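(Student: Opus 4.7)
The plan is to apply Theorem \ref{CQ FR} directly, since a Hilbert space is super-reflexive, and then exploit two classical facts about Hilbert spaces: ultrapowers of Hilbert spaces are again Hilbert, and linear quotients of Hilbert spaces are again Hilbert.

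More precisely, let $H$ be a Hilbert space and suppose $Y$ is a coarse quotient of $H$. First I would invoke the super-reflexivity of $H$ (every Hilbert space is uniformly convex, hence super-reflexive) so that the hypotheses of Theorem \ref{CQ FR} are met. The theorem then yields a free ultrafilter $\mathcal{U}$ on $\mathbb{N}$ such that $Y$ is isomorphic to a linear quotient of the ultrapower $H_{\mathcal{U}}$.

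Next I would observe that $H_{\mathcal{U}}$ is itself a Hilbert space. This is because the parallelogram identity
\begin{equation*}
\|u+v\|^2 + \|u-v\|^2 = 2\|u\|^2 + 2\|v\|^2
\end{equation*}
is preserved under taking ultrapowers: if $\tilde{x}=(x_n)_{\mathcal{U}}$ and $\tilde{y}=(y_n)_{\mathcal{U}}$ in $H_{\mathcal{U}}$, then applying the identity coordinatewise in $H$ and passing to the limit along $\mathcal{U}$ gives the same identity in $H_{\mathcal{U}}$. Thus the norm on $H_{\mathcal{U}}$ is induced by an inner product.

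Finally, every linear quotient of a Hilbert space is (isometrically, and in particular isomorphically) a Hilbert space: if $N$ is a closed subspace of $H_{\mathcal{U}}$, then $H_{\mathcal{U}}/N$ is isometric to the orthogonal complement $N^\perp$ with the inherited Hilbert structure. Combining this with the previous step yields that $Y$ is isomorphic to a Hilbert space. I do not anticipate any real obstacle here; the entire content of the corollary is the combination of Theorem \ref{CQ FR} with these two standard structural properties of Hilbert spaces.
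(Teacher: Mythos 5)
Your proof is correct, but it takes a different route from the paper's. The paper uses the \emph{first} conclusion of Theorem \ref{CQ FR}: $Y^*$ is crudely finitely representable in $H^*$, which is a Hilbert space, and then invokes (implicitly) the fact that a Banach space crudely finitely representable in a Hilbert space is isomorphic to a Hilbert space --- this is a nontrivial result, essentially Kwapie\'n's theorem via type-2/cotype-2 estimates. By contrast, you use the \emph{second} conclusion: $Y$ is isomorphic to a linear quotient of an ultrapower of $H$, and then reduce to two entirely elementary facts about Hilbert spaces, namely that the parallelogram identity passes to ultrapowers and that a quotient $H/N$ is isometric to $N^\perp$. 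Both arguments are valid; yours is self-contained modulo Theorem \ref{CQ FR} and avoids the deeper finite-representability machinery, while the paper's is shorter in text and reuses the same dual-side reasoning that drives Corollary \ref{CQ Lp}. One small imprecision in your write-up: Theorem \ref{CQ FR} promises only that $Y$ is a linear quotient of \emph{some} ultrapower of $X$, not necessarily one built over $\mathbb{N}$; this does not affect your argument since your two structural facts about Hilbert spaces hold for ultrapowers over arbitrary index sets.
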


\begin{proof}
Let $Y$ be a Banach space that is a coarse quotient of a Hilbert
space. By Theorem \ref{CQ FR}, $Y^*$ is crudely finitely
representable in a Hilbert space, hence must be isomorphic to a
Hilbert space. Therefore $Y$ is isomorphic to a Hilbert space.
\end{proof}

\begin{corollary}\label{CQ Lp}
If a Banach space $Y$ is a coarse quotient of $L_p$, $1<p<\infty$,
then $Y$ is isomorphic to a linear quotient of $L_p$.
\end{corollary}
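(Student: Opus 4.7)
The plan is to invoke Theorem~\ref{CQ FR} and then descend from an ultrapower of $L_p$ down to $L_p[0,1]$ itself by a short duality argument. Since $L_p$ is super-reflexive for $1<p<\infty$, Theorem~\ref{CQ FR} gives that $Y$ is isomorphic to a linear quotient of some ultrapower $(L_p)_{\mathcal{U}}$, and that $Y^*$ is crudely finitely representable in $L_p^*=L_q$ with $1/p+1/q=1$. By the classical identification of ultrapowers of $L_p$-spaces (see \cite{up}), $(L_p)_{\mathcal{U}}$ is isometric to $L_p(\mu)$ for some measure space $(\Omega,\mu)$, so I may treat $Y$ as a linear quotient of $L_p(\mu)$.

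Next I would record two structural properties of $Y$. First, $Y$ is separable: using $Y=f(L_p[0,1])^K$ together with coarse continuity of the coarse quotient map $f$, the image of a countable dense subset of $L_p[0,1]$ is $D$-dense in $Y$ for some finite $D$, and a rescaling argument then rules out any uncountable $\varepsilon$-separated family in $Y$. Second, $Y$ is reflexive, since the crude finite representability of $Y^*$ in $L_q$ transfers super-reflexivity to $Y^*$ and hence to $Y$. In particular $Y^*$ is both separable and reflexive.

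The last step is to dualize twice. The quotient map $L_p(\mu)\twoheadrightarrow Y$ dualizes to an isomorphic embedding $Y^*\hookrightarrow L_p(\mu)^*=L_q(\mu)$. Since $Y^*$ is separable, a countable dense subset is supported on a $\sigma$-finite portion of $\Omega$, so $Y^*$ embeds isomorphically into $L_q(\mu_0)$ for some $\sigma$-finite $\mu_0$; and for $1<q<\infty$ such an $L_q(\mu_0)$ embeds isometrically into $L_q[0,1]$ by standard Banach lattice theory. Dualizing the resulting embedding $Y^*\hookrightarrow L_q[0,1]$ and using $L_q[0,1]^*=L_p[0,1]$ together with $Y^{**}\cong Y$ yields the desired linear quotient map from $L_p[0,1]$ onto $Y$. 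Apart from Theorem~\ref{CQ FR} itself, no genuine obstacle is foreseen; the duality and the measure-theoretic reduction from arbitrary $L_q(\mu)$ to $L_q[0,1]$ are routine.
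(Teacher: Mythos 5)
Your proof is correct, but it takes a genuinely different route from the paper once Theorem~\ref{CQ FR} is invoked. The paper uses only the crude finite representability conclusion of Theorem~\ref{CQ FR}: having observed that $Y$ (hence $Y^*$) is separable and super-reflexive, it cites Lindenstrauss--Pe{\l}czy\'nski \cite{LP} to conclude directly that a separable Banach space crudely finitely representable in $L_q$ embeds isomorphically into $L_q$, and then dualizes once. You instead use the ultrapower-quotient conclusion: you identify $(L_p)_{\mathcal{U}}$ with a concrete $L_p(\mu)$ (Kakutani representation of abstract $L_p$-spaces), dualize the quotient map to get $Y^*\hookrightarrow L_q(\mu)$, pass to a $\sigma$-finite (hence, by separability of $Y^*$, separable) portion of the measure so as to land in $L_q[0,1]$, and then dualize again using reflexivity. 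Both arguments are sound; the paper's is shorter because \cite{LP} does the heavy lifting in one step, whereas yours replaces that citation with the (also classical) structure theory of ultrapowers of $L_p$ and a measure-theoretic localization, at the cost of an extra dualization. One small point worth making explicit in your write-up: the reduction from $L_q(\mu_0)$ with $\mu_0$ $\sigma$-finite to $L_q[0,1]$ uses not just $\sigma$-finiteness but separability of the relevant closed sublattice (equivalently, that $Y^*$ is contained in the $L_q$-space of a countably generated, hence separable, measure algebra); $\sigma$-finite alone does not guarantee an embedding into $L_q[0,1]$.
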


\begin{proof}
Note that $Y$ must be super-reflexive and separable, so $Y^*$ is also
separable. Moreover, by Theorem \ref{CQ FR} $Y^*$ is crudely finitely representable
in $L_q$ with $1/p+1/q=1$. It follows that $Y^*$ isomorphically embeds into $L_q$ (see
\cite{LP}), i.e., $Y$ is isomorphic to a linear quotient of $L_p$.
\end{proof}

\section{Coarse quotient mappings and property ($\beta$)}

In this section our goal is to show that $\ell_q$ is not a coarse
quotient of $\ell_p$ for $1<p<q<\infty$. The idea comes from Lima
and Randriantoanina \cite{LimaLova}, where they proved the result in
the uniform category. Their proof relies on a geometric property
introduced by Rolewicz in \cite{beta} which is now called property
($\beta$). Let us recall the equivalent definition of property
($\beta$) given by Kutzarova \cite{Kuza}.

\begin{definition}
A Banach space $X$ is said to have property ($\beta$) if for any
$\epsilon>0$ there exists $0<\delta<1$ such that for every element
$x\in B_X$ and every sequence $(x_n)_{n=1}^{\infty}\subset B_X$ with
$\text{sep}(\{x_n\})\geq\epsilon$ there exists an index $i$ such
that
$$\|\dfrac{x-x_i}{2}\|\leq1-\delta.$$
Here the separation of the sequence is defined by
$$\text{sep}(\{x_n\})=\inf\{\|x_n-x_m\|:n\neq m\}$$
and $B_X$ is the closed unit ball of $X$.
\end{definition}

Ayerbe, Dom\'{\i}nguez Benavides and Cutillas defined a modulus for
the property ($\beta$) in \cite{betaMod}, as follows:

\vspace{2mm}

$\bar{\beta}_X:[0,a]\rightarrow[0,1]:$
$$\bar{\beta}_X(\epsilon)=1-\sup\left\{\inf\left
\{\frac{\|x-x_n\|}{2}:n\in\mathbb{N}\right\}:x\in B_X, (x_n)_{n=1}^{\infty}\subset B_X,
\text{sep}(\{x_n\})\geq\epsilon\right\},$$
where the constant $a\in[1,2]$ is the maximal separation of sequences in $B_X$, which
depends on the Banach space $X$. We say that the ($\beta$)-modulus has
power type $p$ if there exists a constant $A>0$ such that $\bar{\beta}_X(\epsilon)\geq
A\epsilon^p$ for all $\epsilon\in[0,a]$. The ($\beta$)-modulus of $\ell_p$ ($1<p<\infty$) was
also computed in \cite{betaMod} and is given by

$$\bar{\beta}_{\ell_p}(\epsilon)=1-\frac{1}{2}\left(\left(1+\left(1-
\frac{\epsilon^p}{2}\right)^{1/p}\right)^p+\frac{\epsilon^p}{2}\right)^{1/p},~
0\leq\epsilon\leq2^{1/p}.$$
One can directly check that $\bar{\beta}_{\ell_p}$ has
power type $p$ (see \cite{LimaLova}). Indeed, it was shown in \cite{UQ4}
that the ($\beta$)-modulus of any $\ell_p$-sum of finite-dimensional spaces
is the same as that of $\ell_p$, hence has power type $p$.

On the other hand, we still need to make use of the co-Lipschitz for
large distances principle. Let $K$ be as in the Definition \ref{CQ}.
For $d>2K$, denote $C_d$ the supremum of all $C$ that satisfies
\eqref{colip}. Clearly, $C_d$ is nondecreasing with respect to $d$.
Moreover, we have:

\begin{lemma}\label{lim}
Let $X$ and $Y$ be two metric spaces and $f:X\rightarrow Y$ be a
coarse quotient mapping with constant $K$. Assume that $Y$ is metrically
convex and $f$ is Lipschitz for large distances.
Then $\lim\limits_{d\rightarrow\infty}C_d<\infty$.
\end{lemma}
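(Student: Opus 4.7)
The plan is to show that the co-Lipschitz constant $C_d$ is eventually dominated by the large-distance Lipschitz constant of $f$ itself, which is finite by hypothesis.

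First I would unpack the Lipschitz-for-large-distances property and fix constants $L>0$ and $d_0>0$ with $d(f(u),f(v))\leq L\cdot d(u,v)$ whenever $d(u,v)\geq d_0$. Setting $M:=\omega_f(d_0)$, which is finite by coarse continuity, yields the crude global bound $d(f(u),f(v))\leq L\cdot d(u,v)+M$ for all $u,v\in X$.

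Next, fix $d>2K$ large enough that $d>K+M$, and take any $C<C_d$; then \eqref{colip} holds for this $C$, every $x\in X$, and every $r\geq d$. The idea is to test the inclusion on a witness point $y$ that is as far from $f(x)$ as possible, thereby forcing a quantitative relationship between the expansion of $f$ and the co-Lipschitz constant. For a fixed $x\in X$ and any $r\geq d$, metric convexity of $Y$ together with the fact that $Y$ contains points arbitrarily far from $f(x)$ produces $y\in Y$ with $d(y,f(x))=r$ (take any $z\in Y$ with $d(z,f(x))\geq r$ and apply Menger interpolation on a geodesic-like segment). Since $y\in B(f(x),r)$, co-coarse continuity gives $u\in X$ with $d(u,x)\leq r/C$ and $d(y,f(u))\leq K$.

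The triangle inequality then yields $d(f(u),f(x))\geq r-K$, while the global Lipschitz bound gives $d(f(u),f(x))\leq L\cdot r/C+M$. Combining,
\[
r-K\leq \frac{Lr}{C}+M,\qquad\text{hence}\qquad C\leq\frac{Lr}{r-K-M}.
\]
Letting $r\to\infty$ drives the right-hand side to $L$, so $C\leq L$. Therefore $C_d\leq L$, and since $C_d$ is nondecreasing in $d$, $\lim_{d\to\infty}C_d\leq L<\infty$.

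The main subtlety is producing the witness point $y$ at distance exactly $r$ from $f(x)$ for arbitrarily large $r$: this requires $Y$ to contain points arbitrarily far from $f(x)$, i.e., to be unbounded—an implicit hypothesis that is automatic in the paper's Banach-space applications where the lemma will be invoked (with $Y=\ell_q$). Once $y$ is in hand, the remainder is routine triangle-inequality bookkeeping driven by letting $r\to\infty$.
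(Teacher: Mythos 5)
Your argument is correct and follows essentially the same route as the paper's: both use the co-Lipschitz inclusion to produce a preimage $u$ of a point at distance (nearly) $r$ from $f(x)$, then apply the forward Lipschitz-for-large-distances bound to force an inequality of the form $r-K\leq (\text{const})\cdot r/C+\text{const}$, which caps $C$ uniformly. The differences are cosmetic --- you let $r\to\infty$ where the paper picks a single $r>C$, and you use the affine bound $Lt+M$ where the paper uses $\omega_f(t)\leq\max\{\omega_f(1),l(1)t\}$ --- and you are right to flag that the key step (a ball of radius $r$ cannot sit inside one of smaller radius) tacitly needs $Y$ to be unbounded, an assumption the paper leaves implicit but which holds in every application.
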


\begin{proof}
It suffices to show that $\{C_d\}_{d>2K}$ is bounded.
Let $l(1)>0$ be the Lipschitz constant of $f$ when distance
of points in $X$ is at least 1. For any $d>2K$,
by Lemma \ref{core} there exists $C=C(d)>0$ such
that for all $x\in X$ and $r\geq d$, $B(f(x),r)\subset
f(B(x,r/C))^K$. Note that
$$f(B(x,\dfrac{r}{C}))^K\subset
B(f(x),\omega_f(\dfrac{r}{C}))^K=B(f(x),\omega_f(\dfrac{r}{C})+K),$$
so
$$r\leq\omega_f(\frac{r}{C})+K\leq\max\{\omega_f(1),
l(1)\cdot\dfrac{r}{C}\}+K.$$
Now pick $r$ large so that $r>C$. Then $r\leq\dfrac{\eta r}{C}+K$,
where $\eta:=\max\{\omega_f(1), l(1)\}$.
It follows that $C\leq\dfrac{\eta r}{r-K}<2\eta$,
and hence $C_d\leq2\eta$.

\end{proof}

\begin{theorem}\label{main}
Let $X$ and $Y$ be Banach spaces and $1<p<q<\infty$.
Assume that the ($\beta$)-modulus of $X$ has
power type $p$ and $Y$ contains a subspace isomorphic to $\ell_q$.
Then there is no coarse quotient mapping from any subset of $X$
to $Y$ that is Lipschitz for large distances.
\end{theorem}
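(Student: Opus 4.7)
Plan: I argue by contradiction, adapting the uniform-quotient proof of Lima and Randriantoanina~\cite{LimaLova} to the coarse category by means of Lemmas~\ref{core} and~\ref{lim}. Suppose $f\colon S\subseteq X\to Y$ is a coarse quotient mapping with constant $K$ that is Lipschitz for large distances, and let $T\colon\ell_q\to\tilde Y\subseteq Y$ be an isomorphism with $\tilde Y$ a subspace of $Y$. Because $Y$ is metrically convex (being a Banach space), Proposition~\ref{LipLarge} together with Lemmas~\ref{core} and~\ref{lim} supply constants $L,C,r_0>0$ such that
\[
f(B(x,r)\cap S)\subset B(f(x),Lr)\qquad\text{and}\qquad B(f(x),r)\subset f(B(x,r/C)\cap S)^K
\]
for every $x\in S$ and every $r\geq r_0$. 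At sufficiently large scales, $f$ therefore enjoys two-sided Lipschitz control up to an additive error $K$, which is what the uniform quotient setting of~\cite{LimaLova} has for free.

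Next I would transport the $\ell_q$-structure back into $X$. Fix $x_0\in S$, set $y_0=f(x_0)$, and for a large parameter $R$ consider the points $y_n:=y_0+R\,T(e_n)\in B(y_0,R\|T\|)$, which satisfy $\|y_n-y_m\|\geq R\,2^{1/q}/\|T^{-1}\|$. The co-Lipschitz estimate provides $x_n\in S\cap B(x_0,R\|T\|/C)$ with $\|f(x_n)-y_n\|\leq K$, and combining with Lipschitz-for-large-distances gives $\|x_n-x_m\|\geq(R\,2^{1/q}/\|T^{-1}\|-2K)/L$. Consequently the normalized vectors
\[
u_n:=\frac{C}{R\|T\|}\bigl(x_n-x_0\bigr)\in B_X
\]
are, for all $R$ large enough to dominate $2K$, separated by a constant $\epsilon_0>0$ depending only on $L,C,T$ and $q$, but \emph{not} on $R$.

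The core step is then to replay the Lima--Randriantoanina mechanism pitting the power-type-$p$ $(\beta)$-modulus of $X$ against the rigid $q$-power geometry of $\ell_q$. At each center one uses property~$(\beta)$ to get a multiplicative shrinkage factor $(1-A\epsilon_0^p)$: any prescribed target in the ball of radius $R\|T\|/C$ lies within $2(R\|T\|/C)(1-A\epsilon_0^p)$ of one of the $x_n$. Transporting this back to $Y$ through the Lipschitz-for-large-distances bound converts it into a corresponding contraction on the $Y$-side. Iterating the lifting along a tree of $\ell_q$-separated sequences (using that an $\ell_q$-isomorph sits inside every scaled copy of $\tilde Y$), one accumulates a factor $(1-A\epsilon_0^p)^N$ in $X$ after $N$ levels, whereas the $q$-power geometry of $\ell_q$ allows no comparable contraction in $Y$. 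For $p<q$ the quantitative mismatch eventually overwhelms the fixed constants $L,C,\|T\|,\|T^{-1}\|$, giving the contradiction.

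I expect the main obstacle to be carrying the additive error $K$ through the iteration cleanly, so that the multiplicative $(\beta)$-effect is not drowned out at small effective scales. This is handled by keeping every working scale $\gg K$: Lemma~\ref{lim} guarantees that $C$ stays bounded as the scale grows to infinity, so by choosing the initial $R$ large enough (and terminating the iteration before the $X$-scale falls below the threshold $r_0$) the additive error is negligible at each level, and the combinatorial estimate of~\cite{LimaLova} transfers to the coarse setting essentially verbatim.
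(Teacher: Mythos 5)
Your setup — invoking Proposition~\ref{LipLarge} and Lemmas~\ref{core} and~\ref{lim} to get two-sided large-scale Lipschitz control with additive error $K$, and then lifting an $\ell_q$-separated family $y_n$ to a separated family $x_n$ in $X$ — is the right preparation and matches the paper. However, the core of your argument diverges from the Lima--Randrianarivony mechanism you cite, and as written has a genuine gap.

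The decisive ingredient in the paper (and in \cite{LimaLova}) is an \emph{extremal} argument, not an iteration. One fixes a small $\epsilon>0$ and uses the fact that the limit $C=\lim_d C_d$ from Lemma~\ref{lim} is a supremum: there must exist $z_\epsilon\in S$, a large radius $R$, and a target $y_\epsilon$ with $B(y_\epsilon,K)\cap f\bigl(B(z_\epsilon,\tfrac{R}{C+\epsilon})\cap S\bigr)=\emptyset$. This ``almost-failure'' point is what forces the lifted points $x_n$ of $y_\epsilon$ to stay \emph{far} from $z_\epsilon$, producing the lower bound $\|z_\epsilon-z_n\|\gtrsim \tfrac{R}{C}-\tfrac{R}{3C}$, which is then played off against the radius $\approx\tfrac{R}{3C}$ in the ($\beta$)-modulus inequality. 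Your proposal never makes this extremal choice: you take a generic $x_0$, and a generic $x_0$ gives you no lower bound on the distance from $x_0$ to the lifts of any particular target, hence nothing to feed into the numerator of the $\bar\beta_X$ estimate.

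The second, and more fundamental, gap is that your construction never exploits the hypothesis $p<q$. Your separation $\epsilon_0\approx\dfrac{C\,2^{1/q}}{L\|T\|\|T^{-1}\|}$ is a \emph{fixed} constant independent of any small parameter, so property~$(\beta)$ only hands you a fixed $\delta>0$. The power type of $\bar\beta_X$ plays no role at that point, and the power $q$ of $\ell_q$ enters only through the constant $2^{1/q}$; there is no quantity tending to $0$ against which to compare $\epsilon^{p/q}$ and $\epsilon$. In the paper's proof the points $y_n$ are built with an $\epsilon$-weighted perturbation $\epsilon^{1/q}\tfrac{R}{3}e_n+(1-\epsilon)^{1/q}(M-m)_N+m$, so the separation of the lifts scales like $\epsilon^{1/q}$ while the slack in the co-Lipschitz estimate is of order $\epsilon$; plugging into $\bar\beta_X$ gives $A\epsilon^{p/q}\leq O(\epsilon)$, which is contradictory for $p<q$ as $\epsilon\to 0$. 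Your proposed remedy — ``iterating along a tree of $\ell_q$-separated sequences'' and accumulating $(1-A\epsilon_0^p)^N$ — is a different-flavored argument (more like a Szlenk/tree estimate) and does not obviously produce a contradiction here: you do not say what quantity on the $\ell_q$-side is supposed to fail to contract, and since the additive error $K$ and the lower scale threshold $r_0$ force a finite iteration depth, $(1-A\epsilon_0^p)^N$ stays bounded below. To make this work you would need to supply the extremal choice of $C$ and the $\epsilon$-scaled separation; with those in place the proof is a single-shot inequality and no iteration is needed.
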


\begin{proof}
In view of Corollary \ref{metriclip}, we may without loss
of generality assume that $Y=\ell_q$. Suppose that there exist a subset $S$ of $X$
and a coarse quotient mapping $f:S\rightarrow\ell_q$ with constant $K$
so that $f$ is Lipschitz for large distances.
Let $C$ be the finite limit given by Lemma \ref{lim} with respect to $f$.
Fix a small $0<\epsilon<1$ and consider large
$d_0$ so that $d_0/3>2K$ and $C-\epsilon<C_{d_0/3}\leq
C_{d_0}\leq C<C+\epsilon$. Since $C_{d_0}<C+\epsilon$, by the
definition of $C_{d_0}$ as a supremum, there exist $z_{\epsilon}\in
S$ and $R\geq d_0$ such that
$$B(f(z_{\epsilon}), R)\nsubseteq
f(B(z_{\epsilon},\dfrac{R}{C+\epsilon})\cap S)^K,$$
so there exists $y_{\epsilon}\in\ell_q$ satisfying
$0<\|y_{\epsilon}-f(z_{\epsilon})\|:=\gamma\leq R$ and
\begin{align}\label{empty}
B(y_{\epsilon},K)\cap f(B(z_{\epsilon},\dfrac{R}{C+\epsilon})\cap
S)=\emptyset.
\end{align}

Let $m$ and $M$ be two points on the line segment with endpoints
$y_{\epsilon}$ and $f(z_{\epsilon})$ such that
$\|y_{\epsilon}-M\|=\|M-m\|=\|m-f(z_{\epsilon})\|=\gamma/3$. Since
$C-\epsilon<C_{d_0/3}$ and $R/3\geq d_0/3$, by the definition of
$C_{d_0/3}$ as a supremum we have
$$B(f(z_{\epsilon}),\dfrac{R}{3})\subset
f(B(z_{\epsilon},\dfrac{R}{3(C-\epsilon)})\cap S)^K.$$
Note that $\|m-f(z_{\epsilon})\|=\gamma/3\leq
R/3$, so there exists $x\in S$ satisfying
$\|x-z_{\epsilon}\|\leq\dfrac{R}{3(C-\epsilon)}$ and $\|m-f(x)\|\leq
K$.

Let $(e_n)_{n=1}^{\infty}$ be the unit vector basis for $\ell_q$,
and denote by $(M-m)_N$ the truncation of $M-m$ supported on the first
$N$ coordinates. Choose $N\in\mathbb{N}$ large so that
$\|(M-m)-(M-m)_N\|_q<\epsilon \dfrac{R}{3}$. For $n>N$, set
\begin{align}\label{yn}
y_n:=\epsilon^{1/q}\dfrac{R}{3}e_n+(1-\epsilon)^{1/q}(M-m)_N+m.
\end{align}
Then
\begin{align}
\|y_n-m\|_q^q&=\|\epsilon^{1/q}\dfrac{R}{3}e_n+(1-\epsilon)^{1/q}(M-m)_N\|_q^q\nonumber\\
&=\epsilon(\dfrac{R}{3})^q+(1-\epsilon)\|(M-m)_N\|_q^q\leq\epsilon(\dfrac{R}{3})^q+(1-\epsilon)(\dfrac{\gamma}{3})^q
\leq(\dfrac{R}{3})^q,\nonumber
\end{align}
so $\|y_n-m\|\leq R/3$. Choose $d_0$ large enough so that
$d_0\epsilon\geq K$, we then have
$$\|y_n-f(x)\|\leq\|y_n-m\|+\|m-f(x)\|\leq
\dfrac{R}{3}+K\leq(\dfrac{1}{3}+\epsilon)R.$$
Noting that $(\dfrac{1}{3}+\epsilon)R\geq\dfrac{d_0}{3}$,
again by the definition of $C_{d_0/3}$,
\begin{align}\label{zn}
B(f(x),(\dfrac{1}{3}+\epsilon)R)\subset
f(B(x,\dfrac{(\frac{1}{3}+\epsilon)R}{C-\epsilon})\cap S)^K,
\end{align}
so there exists $z_n\in S$ satisfying
$\|z_n-x\|\leq\dfrac{(\frac{1}{3}+\epsilon)R}{C-\epsilon}$ and
$\|y_n-f(z_n)\|\leq K$. Now we estimate $\|y_n-y_{\epsilon}\|$:
\begin{align}
&\|y_n-y_{\epsilon}\|_q^q\nonumber\\
&=\left\|\epsilon^{1/q}\dfrac{R}{3}e_n+(1-\epsilon)^{1/q}(M-m)_N-(y_{\epsilon}-m)\right\|_q^q\nonumber\\
&=\left\|\epsilon^{1/q}\dfrac{R}{3}e_n+(1-\epsilon)^{1/q}(M-m)_N-2(M-m)\right\|_q^q\nonumber\\
&=\bigg\|\epsilon^{1/q}\dfrac{R}{3}e_n+2\Big((M-m)_N-(M-m)\Big)+(1-\epsilon)^{1/q}(M-m)_N-2(M-m)_N\bigg\|_q^q\nonumber\\
&=\left\|\epsilon^{1/q}\dfrac{R}{3}e_n+2\Big((M-m)_N-(M-m)\Big)\right\|_q^q+\left(2-(1-\epsilon)^{1/q}\right)^q\|(M-m)_N\|_q^q\nonumber\\
&\leq\left(\epsilon^{1/q}\dfrac{R}{3}+2\epsilon\dfrac{R}{3}\right)^q+\big(2-(1-\epsilon)\big)^q\left(\dfrac{R}{3}\right)^q\nonumber\\
&\leq3^q\epsilon\left(\dfrac{R}{3}\right)^q+(1+\epsilon)^q\left(\dfrac{R}{3}\right)^q\nonumber\\
&<(1+2\cdot3^q\epsilon)\left(\dfrac{R}{3}\right)^q\nonumber,
\end{align}
so
$$\|y_n-y_{\epsilon}\|\leq\left(1+2\cdot3^q\epsilon\right)^{1/q}\dfrac{R}{3}\leq\left(1+\dfrac{2\cdot3^q\epsilon}{q}\right)\dfrac{R}{3},$$
and hence
\begin{align}
\|y_{\epsilon}-f(z_n)\|&\leq\|y_{\epsilon}-y_n\|+\|y_n-f(z_n)\|\nonumber\\
&\leq\left(1+\dfrac{2\cdot3^q\epsilon}{q}\right)\dfrac{R}{3}+K\nonumber\\
&\leq\left(1+\dfrac{2\cdot3^q\epsilon}{q}\right)\dfrac{R}{3}+\epsilon
R=\left(\dfrac{1}{3}+\epsilon+\dfrac{2\cdot3^{q-1}\epsilon}{q}\right)R:=\rho_\epsilon
R.\nonumber
\end{align}
Noting that $\rho_\epsilon R>R/3\geq d_0/3$, again by the definition
of $C_{d_0/3}$ we have
\begin{align}\label{xn}
B(f(z_n), \rho_\epsilon R)\subset
f(B(z_n,\dfrac{\rho_\epsilon R}{C-\epsilon})\cap S)^K,
\end{align}
so there exists $x_n\in S$ satisfying
$\|x_n-z_n\|\leq\dfrac{\rho_\epsilon R}{C-\epsilon}$ and
$\|y_\epsilon-f(x_n)\|\leq K$. In view of \eqref{empty}, we have
$\|x_n-z_\epsilon\|>\dfrac{R}{C+\epsilon}$. Also note that
$\rho_\epsilon\downarrow\dfrac{1}{3}$ as $\epsilon\downarrow0$, so
if $\epsilon$ is chosen small enough so that
$\dfrac{1}{C+\epsilon}-\dfrac{\rho_\epsilon}{C-\epsilon}>0$, then
the triangle inequality gives
$$\|z_\epsilon-z_n\|\geq\|z_\epsilon-x_n\|-\|x_n-z_n\|\geq\dfrac{R}{C+\epsilon}-\dfrac{\rho_\epsilon
R}{C-\epsilon}>0.$$

On the other hand, we could choose large $d_0$ so that
$(2\epsilon)^{1/q}\cdot\dfrac{d_0}{6}>\omega_f(1)+2K$. Then for
$k,n>N$ with $k\neq n$,
\begin{align}
\omega_f(1)+2K<(2\epsilon)^{1/q}\cdot\dfrac{R}{3}&=\|y_n-y_k\|_q\nonumber\\
&\leq\|y_n-f(z_n)\|+\|f(z_n)-f(z_k)\|+\|y_k-f(z_k)\|\nonumber\\
&\leq2K+\omega_f(\|z_n-z_k\|).\nonumber
\end{align}
Thus $\omega_f(\|z_n-z_k\|)>\omega_f(1)$ and it follows that
$\|z_n-z_k\|>1$ since $\omega_f$ is nondecreasing. Hence the
Lipschitz for large distances property gives
$$(2\epsilon)^{1/q}\cdot\dfrac{R}{3}=\|y_n-y_k\|_q\leq l(1)\|z_n-z_k\|+2K\leq l(1)\|z_n-z_k\|+(2\epsilon)^{1/q}\cdot\dfrac{R}{6},$$
where $l(\cdot)$ is given in Proposition \ref{LipLarge}. This
implies that $\|z_n-z_k\|\geq(2\epsilon)^{1/q}\cdot\dfrac{R}{6l(1)}$.

In summary, for all $n,k>N$ with $n\neq k$ we have:

$$\|z_n-z_k\|\geq(2\epsilon)^{1/q}\cdot\dfrac{R}{6l(1)},\hspace{5mm}\|z_\epsilon-z_n\|\geq\dfrac{R}{C+\epsilon}-\dfrac{\rho_\epsilon
R}{C-\epsilon},$$
$$\|z_{\epsilon}-x\|\leq\dfrac{R}{3(C-\epsilon)},\hspace{5mm}\|z_n-x\|\leq\dfrac{(\frac{1}{3}+\epsilon)R}{C-\epsilon}.$$
Assume that $\epsilon$ is small enough, by the definition of
$\bar{\beta}_X$ we get
\begin{align}\label{beta}
\bar{\beta}_X\left(\dfrac{(2\epsilon)^{1/q}}{6l(1)}\cdot\dfrac{C-\epsilon}{\frac{1}{3}+\epsilon}\right)\leq
1-\frac{1}{2}\cdot\dfrac{\dfrac{1}{C+\epsilon}-\dfrac{\rho_\epsilon}{C-\epsilon}}{\dfrac{\frac{1}{3}+\epsilon}{C-\epsilon}}.
\end{align}
Note that $\bar{\beta}_X(\cdot)$ is nondecreasing and
has power type $p$, so if we started with
small $\epsilon$ so that
$\dfrac{C-\epsilon}{1+3\epsilon}>\dfrac{C}{2}$,
$$\text{left
side of \eqref{beta}}
\geq\bar{\beta}_X\left(\dfrac{C}{2^{2-\frac{1}{q}}l(1)}\cdot\epsilon^{1/q}\right)\geq
A\epsilon^{p/q}$$ for some $A>0$, whereas
\begin{align}
\text{right side of \eqref{beta}}
&\leq1-(\dfrac{1}{2}-\dfrac{\epsilon}{C})\cdot\dfrac{3}{1+3\epsilon}+\dfrac{1}{2}+\dfrac{3^{q-1}\epsilon}{(\frac{1}{3}+\epsilon)q}\nonumber\\
&\leq\frac{3}{2}\left(1-\dfrac{1}{1+3\epsilon}\right)+\dfrac{3\epsilon}{C}+\dfrac{3^{q}\epsilon}{q}\nonumber\\
&\leq\left(\frac{9}{2}+\dfrac{3}{C}+\dfrac{3^{q}}{q}\right)\epsilon\nonumber,
\end{align}
so
$$\left(\frac{9}{2}+\dfrac{3}{C}+\dfrac{3^{q}}{q}\right)\epsilon^{1-\frac{p}{q}}\geq A.$$
Since $1<p<q<\infty$, we get a contradiction by letting
$\epsilon\rightarrow0$.

\end{proof}

\begin{remark}
In the case when $S$ is a subspace of $X$, Lemma \ref{Johnson} can be applied to simplify the proof.
\end{remark}

\begin{corollary}\label{lplq}
Let $Y$ be a Banach space and $1<p<q<\infty$. If $\ell_q$ isomorphically embeds into $Y$,
then $Y$ is not a coarse quotient of any $\ell_p$-sum of finite-dimensional spaces.
In particular, $\ell_q$ is not a coarse quotient of $\ell_p$.
\end{corollary}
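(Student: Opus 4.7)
The plan is to derive Corollary \ref{lplq} as a direct consequence of Theorem \ref{main}, with $X$ taken to be the given $\ell_p$-sum of finite-dimensional spaces and $Y$ the Banach space containing an isomorphic copy of $\ell_q$. To invoke Theorem \ref{main}, two hypotheses must be checked: that $X$ has $(\beta)$-modulus of power type $p$, and that any coarse quotient mapping $f: X \to Y$ is automatically Lipschitz for large distances (so that the nonexistence conclusion of Theorem \ref{main} actually rules out coarse quotient mappings defined on $X$ itself, not merely on subsets).

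For the first hypothesis, I would simply cite the computation already recalled right after the definition of the $(\beta)$-modulus: by \cite{UQ4}, every $\ell_p$-sum of finite-dimensional spaces has the same $(\beta)$-modulus as $\ell_p$, and from the explicit formula $\bar{\beta}_{\ell_p}(\epsilon)=1-\tfrac{1}{2}((1+(1-\epsilon^p/2)^{1/p})^p+\epsilon^p/2)^{1/p}$ a Taylor expansion at $\epsilon=0$ gives power type $p$. For the second hypothesis, observe that any Banach space is metrically convex (take $x_t=(1-t)x_0+tx_1$), so Proposition \ref{LipLarge} applies: since a coarse quotient mapping is in particular coarsely continuous and its domain $X$ is metrically convex, $f$ is Lipschitz for large distances.

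Now suppose, for a contradiction, that $Y$ is a coarse quotient of such an $X$; then there is a coarse quotient mapping $f: X \to Y$, which by the observation above is Lipschitz for large distances. Taking the subset in Theorem \ref{main} to be all of $X$, we directly contradict the conclusion of that theorem. The special case $\ell_q$ not being a coarse quotient of $\ell_p$ follows by writing $\ell_p$ as an $\ell_p$-sum of one-dimensional spaces and applying the general statement with $Y=\ell_q$.

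I do not expect any serious obstacle: the work has all been done in Theorem \ref{main}, and this corollary is essentially a packaging step. The only point worth being careful about is not conflating the two roles of the hypothesis "Lipschitz for large distances" in Theorem \ref{main} — there it is a hypothesis about the candidate mapping from a subset, whereas here metric convexity of the whole Banach space $X$ gives it to us for free via Proposition \ref{LipLarge}.
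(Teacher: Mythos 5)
Your proposal is correct and is precisely the intended derivation: the paper gives no separate proof for this corollary because it is an immediate consequence of Theorem \ref{main}, exactly as you argue — the $(\beta)$-modulus hypothesis is supplied by the cited result of \cite{UQ4}, and the ``Lipschitz for large distances'' hypothesis is automatic for a coarse quotient mapping defined on all of $X$ since a Banach space is metrically convex and Proposition \ref{LipLarge} applies. Your cautionary note about not conflating the subset hypothesis with the automatic large-distance Lipschitz property is exactly the right point to flag.
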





A consequence of Corollary \ref{lplq} is the following isomorphic
characterization of coarse quotients of $\ell_p$ for $1<p<2$.

\begin{corollary}
If a Banach space $Y$ is a coarse quotient of $\ell_p$, $1<p<2$,
then $Y$ is isomorphic to a linear quotient of $\ell_p$.
\end{corollary}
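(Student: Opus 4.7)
The plan is to combine Corollary~\ref{CQ Lp} with the classical Johnson--Odell embedding theorem for subspaces of $L_q$, $q>2$, and then close the argument using Corollary~\ref{lplq} at $q=2$. The whole proof pivots on translating obstructions between $Y$ and $Y^*$ via reflexive duality.

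First I would upgrade the hypothesis to ``$Y$ is a coarse quotient of $L_p$''. Since $\ell_p$ sits complementedly in $L_p$ (via functions with pairwise disjoint supports), a norm-one projection $L_p\to\ell_p$ is a linear, hence coarse, quotient mapping, and composing it with the given coarse quotient $f:\ell_p\to Y$ yields a coarse quotient $L_p\to Y$, using that compositions of coarse quotient mappings are again coarse quotient mappings. Corollary~\ref{CQ Lp} then makes $Y$ isomorphic to a linear quotient of $L_p$; dually, $Y^*$ is isomorphic to a subspace of $L_q$ with $1/p+1/q=1$, so $q>2$. The remark after Definition~\ref{CQ} gives $Y=f(\ell_p)^K$, whence $Y$ is separable, and as a quotient of the super-reflexive $L_p$ it is also reflexive.

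By the Johnson--Odell theorem, a separable subspace of $L_q$ with $q>2$ isomorphically embeds into $\ell_q$ if and only if it contains no isomorphic copy of $\ell_2$. So it suffices to show $\ell_2\not\hookrightarrow Y^*$. By taking adjoints and using reflexivity of $Y$, this is equivalent to showing that $\ell_2$ is not a bounded linear quotient of $Y$: an isomorphic embedding $T:\ell_2\hookrightarrow Y^*$ would yield a bounded surjection $T^*:Y=Y^{**}\to\ell_2^*=\ell_2$ by Hahn--Banach extension.

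Suppose for contradiction that $Q:Y\to\ell_2$ is a bounded linear surjection. Then $Q\circ f:\ell_p\to\ell_2$ is a composition of coarse quotient mappings, hence itself a coarse quotient. This contradicts Corollary~\ref{lplq} applied with $q=2$ and with the target space there taken to be $\ell_2$ itself (into which $\ell_2$ embeds trivially): the corollary asserts precisely that $\ell_2$ is not a coarse quotient of $\ell_p$ when $1<p<2$. Hence $Y^*\hookrightarrow\ell_q$, and dualizing exhibits $Y$ as a linear quotient of $\ell_p$. The only ingredient from outside this paper is the Johnson--Odell theorem; the only genuinely delicate internal step is the duality conversion of the subspace obstruction $\ell_2\hookrightarrow Y^*$ into the quotient obstruction that $\ell_2$ is a linear image of $Y$, which works exactly because $Y$ is reflexive.
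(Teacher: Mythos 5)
Your proof is correct and follows essentially the same route as the paper's (very terse) argument: reduce to ``$Y$ is a linear quotient of $L_p$'' via Corollary~\ref{CQ Lp}, use Corollary~\ref{lplq} at $q=2$ plus composition to rule out $\ell_2$ as a linear quotient of $Y$, and invoke Johnson--Odell to pass from $L_p$ to $\ell_p$. You have simply unpacked the details the paper leaves implicit, most usefully the reflexivity-based duality converting the Johnson--Odell subspace hypothesis $\ell_2\not\hookrightarrow Y^*$ into the quotient obstruction that the paper actually checks.
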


\begin{proof}
A similar proof as in Corollary \ref{CQ Lp} shows that $Y$ is
isomorphic to a linear quotient of $L_p$. On the other hand, by
Corollary \ref{lplq}, $\ell_2$ is not isomorphic to a linear
quotient of $Y$. Therefore by a result of Johnson and Odell \cite{JO}
$Y$ must be isomorphic to a linear quotient of $\ell_p$.
\end{proof}

It follows directly from Theorem \ref{CQ FR} that $c_0$ is not a
coarse quotient of any super-reflexive space, but it was shown in
\cite{sbeta1} and \cite{sbeta2} that there are Banach spaces with
property ($\beta$) that are not super-reflexive. Indeed, a slight
modification to the computation used for Theorem \ref{main} gives
the following result.

\begin{theorem}
Let $X$ and $Y$ be Banach spaces. Assume that $X$ has property ($\beta$)
and $Y$ contains a subspace isomorphic to $c_0$.
Then there is no coarse quotient mapping from any subset of $X$ to
$Y$ that is Lipschitz for large distances.
\end{theorem}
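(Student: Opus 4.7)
My plan is to adapt the argument of Theorem \ref{main} with $c_0$ in place of $\ell_q$, modifying only the construction of the spreading sequence so that property ($\beta$) can be invoked without any power-type quantitative modulus. By Corollary \ref{metriclip} I may assume $Y=c_0$, and aiming at a contradiction I take $f:S\to c_0$ to be a coarse quotient mapping with constant $K$ that is Lipschitz for large distances. Lemma \ref{lim} gives a finite $C=\lim_{d\to\infty}C_d$; fixing a small $\epsilon>0$ I pick $d_0$ so that $d_0/3>2K$, $d_0\epsilon\geq K$, and $C-\epsilon<C_{d_0/3}\leq C_{d_0}<C+\epsilon$. The construction of $z_\epsilon\in S$, $y_\epsilon\in c_0$ with $0<\|y_\epsilon-f(z_\epsilon)\|\leq R$ for some $R\geq d_0$ satisfying the analogue of \eqref{empty}, of the midpoints $M$, $m$ on the segment from $y_\epsilon$ to $f(z_\epsilon)$, and of the auxiliary point $x\in S$ with $\|x-z_\epsilon\|\leq R/(3(C-\epsilon))$ and $\|m-f(x)\|\leq K$, is then transplanted verbatim from the proof of Theorem \ref{main}.

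The one substantive change is the definition of the spreading sequence: I take
\[
y_n:=\tfrac{R}{3}e_n+(M-m)_N+m,\qquad n>N,
\]
with $N$ large enough that $\|(M-m)-(M-m)_N\|_\infty<\epsilon R/6$. Direct $c_0$-norm computations then give $\|y_n-m\|_\infty\leq R/3$, $\|y_n-y_k\|_\infty=R/3$ for distinct $n,k>N$, and $\|y_n-y_\epsilon\|_\infty\leq(1/3+\epsilon/3)R$. Feeding these estimates into the same iteration used in the proof of Theorem \ref{main} produces $z_n,x_n\in S$ satisfying $\|z_n-x\|\leq(1/3+\epsilon)R/(C-\epsilon)$, $\|y_n-f(z_n)\|\leq K$, $\|y_\epsilon-f(x_n)\|\leq K$, and $\|z_\epsilon-z_n\|\geq R/(C+\epsilon)-\rho_\epsilon R/(C-\epsilon)$ for some $\rho_\epsilon\downarrow 1/3$.

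The decisive new feature is that the separation of $\{y_n\}$ is the fixed fraction $R/3$, not an $\epsilon^{1/q}$-multiple of $R$. This forces $\|f(z_n)-f(z_k)\|\geq R/3-2K$, whence the Lipschitz-for-large-distances property yields $\|z_n-z_k\|\geq(R/3-2K)/l(1)$ once $R$ is chosen large compared to $\omega_f(1)$. Normalising the configuration by the enclosing radius $r_n:=(1/3+\epsilon)R/(C-\epsilon)$ centred at $x$, the separation of the sequence $\{(z_n-x)/r_n\}_{n>N}$ in $B_X$ is bounded below by a positive constant $\sigma$ \emph{independent of $\epsilon$}. The definition of $\bar{\beta}_X$ then yields
\[
\bar{\beta}_X(\sigma)\;\leq\;1-\frac{1/(C+\epsilon)-\rho_\epsilon/(C-\epsilon)}{2(1/3+\epsilon)/(C-\epsilon)},
\]
and the right-hand side tends to $0$ as $\epsilon\to 0$, since numerator and denominator both approach $2/(3C)$. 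Since $X$ has property ($\beta$), $\bar{\beta}_X(\sigma)>0$ is a fixed positive quantity, which is the contradiction. The main care required is ordering the choices — $\epsilon$ first, then $R$ (and thus $d_0$) large relative to $K$, $\omega_f(1)$, and $l(1)$, and finally $N$ so that the tail of $M-m$ beyond $N$ is at most $\epsilon R/6$; no other obstacle beyond transcribing the bookkeeping from Theorem \ref{main}.
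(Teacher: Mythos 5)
Your proposal is correct and follows essentially the same argument as the paper: replace the $\epsilon^{1/q}$-scaled spreading vectors of Theorem \ref{main} by $y_n = \tfrac{R}{3}e_n + (M-m)_N + m$, note that the resulting $(z_n)$ are then $\gtrsim R$-separated independently of $\epsilon$, and derive a contradiction as $\epsilon\to 0$. The only difference is cosmetic: the paper closes the argument by invoking the $\delta$ from the definition of property ($\beta$) while you use the modulus $\bar\beta_X$, which is an equivalent phrasing (and arguably more parallel to the proof of Theorem \ref{main}); everything else, including the normalizing radius $(\tfrac{1}{3}+\epsilon)R/(C-\epsilon)$ and the limiting computation $2/(3C)$ for both numerator and denominator, matches the paper up to harmless constant choices such as $\epsilon R/6$ versus $\epsilon R/3$ for the tail of $M-m$.
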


\begin{proof}
In view of Corollary \ref{metriclip}, we may without loss of generality assume that $Y=c_0$.
Suppose that there exist a subset $S$ of $X$
and a coarse quotient mapping $f:S\rightarrow c_0$ with constant $K$
so that $f$ is Lipschitz for large distances.
Let $C$ be the finite limit given by Lemma \ref{lim} with respect to $f$.
Now we proceed the proof for Theorem \ref{main} until \eqref{yn}, the choice of $y_n$.
Instead, for $n>N$, set
$$y_n:=\dfrac{R}{3}e_n+(M-m)_N+m.$$ Then
$$\|y_n-m\|_{\infty}=\|\dfrac{R}{3}e_n+(M-m)_N\|_{\infty}=\max\left\{\dfrac{R}{3},
\|(M-m)_N\|_{\infty}\right\}=\dfrac{R}{3},$$
so as in Theorem \ref{main} we can choose $z_n$ by \eqref{zn}. Moreover,
\begin{align}
\|y_n-y_{\epsilon}\|_{\infty}
&=\left\|\dfrac{R}{3}e_n+(M-m)_N-(y_{\epsilon}-m)\right\|_{\infty}\nonumber\\
&=\left\|\dfrac{R}{3}e_n+(M-m)_N-2(M-m)\right\|_{\infty}\nonumber\\
&=\bigg\|\dfrac{R}{3}e_n+2\Big((M-m)_N-(M-m)\Big)-(M-m)_N\bigg\|_{\infty}\nonumber\\
&=\max\left\{\bigg\|\dfrac{R}{3}e_n+2\Big((M-m)_N-(M-m)\Big)\bigg\|_{\infty}, \|(M-m)_N\|_{\infty}\right\}\nonumber\\
&\leq\max\left\{\dfrac{R}{3}+\dfrac{2\epsilon R}{3},
\dfrac{R}{3}\right\}=(1+2\epsilon)\dfrac{R}{3}.\nonumber
\end{align}
Thus
$$\|y_{\epsilon}-f(z_n)\|\leq\|y_{\epsilon}-y_n\|+\|y_n-f(z_n)\|\leq(1+2\epsilon)\dfrac{R}{3}+K\leq(\dfrac{1}{3}+\dfrac{5\epsilon}{3})R
:=\tilde{\rho}_\epsilon R.\nonumber$$ Note that
$\tilde{\rho}_\epsilon R>R/3\geq d_0/3$, so similarly $x_n$ can be chosen by \eqref{xn}.

On the other hand, we could choose large $d_0$ so that
$d_0/6>\omega_f(1)+2K$. Then for $k,n>N$ with $k\neq n$,
\begin{align}
\omega_f(1)+2K<\dfrac{R}{3}&=\|y_n-y_k\|_{\infty}\leq2K+\omega_f(\|z_n-z_k\|),\nonumber
\end{align}
which again implies that $\|z_n-z_k\|>1$, and hence the Lipschitz for large
distances property gives
$$\dfrac{R}{3}=\|y_n-y_k\|_{\infty}\leq l(1)\|z_n-z_k\|+2K\leq l(1)\|z_n-z_k\|+\dfrac{R}{6},$$
where $l(\cdot)$ is given in Proposition \ref{LipLarge}. This
implies that $\|z_n-z_k\|\geq\dfrac{R}{6l(1)}$.

In summary, for all $n,k>N$ with $n\neq k$ we have:

$$\|z_n-z_k\|\geq\dfrac{R}{6l(1)},\hspace{5mm}\|z_\epsilon-z_n\|\geq\dfrac{R}{C+\epsilon}-\dfrac{\tilde{\rho}_\epsilon
R}{C-\epsilon},$$
$$\|z_{\epsilon}-x\|\leq\dfrac{R}{3(C-\epsilon)},\hspace{5mm}\|z_n-x\|\leq\dfrac{(\frac{1}{3}+\epsilon)R}{C-\epsilon}.$$
Note that if $\epsilon$ is chosen small enough so that $\epsilon\leq
C/2$, one has
$$\Bigg\|\dfrac{(z_n-x)-(z_k-x)}{\dfrac{(\frac{1}{3}+\epsilon)R}{C-\epsilon}}\Bigg\|\geq
\dfrac{1}{6l(1)}\cdot\dfrac{C-\epsilon}{\frac{1}{3}+\epsilon}\geq\dfrac{C}{16l(1)}>0.$$
Therefore by the definition of property ($\beta$) there exists $0<\delta<1$ independent of $\epsilon$
and an index $i>N$ such that
$$\|z_{\epsilon}-z_i\|\leq\dfrac{(\frac{1}{3}+\epsilon)R}{C-\epsilon}\cdot(2-2\delta).$$
It follows that
$$\dfrac{R}{C+\epsilon}-\dfrac{\tilde{\rho}_\epsilon
R}{C-\epsilon}\leq\dfrac{(\frac{1}{3}+\epsilon)R}{C-\epsilon}\cdot(2-2\delta).$$
Let $\epsilon\rightarrow0$ we get $2\leq2-2\delta$, a contradiction.
\end{proof}

\begin{corollary}
$c_0$ is not a coarse quotient of any Banach space with property ($\beta$).
\end{corollary}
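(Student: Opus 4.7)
The plan is to deduce this corollary as an essentially immediate consequence of the theorem just proved, by verifying the one remaining hypothesis (namely, that a coarse quotient mapping from a Banach space is automatically Lipschitz for large distances).

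Suppose for contradiction that $c_{0}$ is a coarse quotient of some Banach space $X$ with property $(\beta)$, witnessed by a coarse quotient mapping $f:X\rightarrow c_{0}$. Since $X$, being a Banach space, is metrically convex, Proposition \ref{LipLarge} applies to $f$: its coarse continuity upgrades to the Lipschitz for large distances property, i.e., for every $\epsilon>0$ there exists $L=L(\epsilon)>0$ with $f(B(x,r))\subset B(f(x),Lr)$ for all $r\geq\epsilon$ and all $x\in X$. Thus we have produced a coarse quotient mapping from the subset $S=X$ of $X$ into $c_{0}$ that is Lipschitz for large distances.

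This directly contradicts the preceding theorem applied with $Y=c_{0}$, which trivially contains $c_{0}$ as a subspace. Hence no such $f$ can exist, establishing that $c_{0}$ is not a coarse quotient of any Banach space with property $(\beta)$. There is no substantive obstacle to overcome here: the real content has already been carried out in the theorem, and Proposition \ref{LipLarge} supplies the bridge between the coarse and the Lipschitz-for-large-distances hypotheses that the theorem requires.
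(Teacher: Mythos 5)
Your proof is correct and is exactly the argument the paper leaves implicit: the corollary is stated without proof as an immediate consequence of the preceding theorem, with the only gap being the observation you supply, namely that Proposition~\ref{LipLarge} makes any coarse quotient mapping from the metrically convex space $X$ automatically Lipschitz for large distances, so the theorem (with $Y=c_0$ and $S=X$) applies.
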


{\bf Acknowledgement.} The author would like to thank William B.
Johnson for his guidance and many helpful suggestions which improved
the paper.

\bibliographystyle{amsplain}

\end{document}